\definecolor{linkred}{rgb}{0.7,0.2,0.2}
\definecolor{linkblue}{rgb}{0,0.2,0.6}
\numberwithin{figure}{section}
\DeclareFontFamily{OMS}{rsfs}{\skewchar\font'60}
\DeclareFontShape{OMS}{rsfs}{m}{n}{<-5>rsfs5 <5-7>rsfs7 <7->rsfs10 }{}
\DeclareSymbolFont{rsfs}{OMS}{rsfs}{m}{n}
\DeclareSymbolFontAlphabet{\scr}{rsfs}
\DeclareSymbolFontAlphabet{\scr}{rsfs}
\DeclareFontFamily{U}{mathx}{\hyphenchar\font45}
\DeclareFontShape{U}{mathx}{m}{n}{
      <5> <6> <7> <8> <9> <10>
      <10.95> <12> <14.4> <17.28> <20.74> <24.88>
      mathx10
      }{}
\DeclareSymbolFont{mathx}{U}{mathx}{m}{n}
\DeclareMathAccent{\wcheck}{0}{mathx}{"71}
\DeclareMathOperator{\Aut}{Aut}
\DeclareMathOperator{\Id}{Id}
\DeclareMathOperator{\img}{img}
\DeclareMathOperator{\Pic}{Pic}
\DeclareMathOperator{\Ramification}{Ramification}
\DeclareMathOperator{\reg}{reg}
\DeclareMathOperator{\supp}{supp}
\newcommand{\sA}{\scr{A}}
\newcommand{\sB}{\scr{B}}
\newcommand{\sC}{\scr{C}}
\newcommand{\sD}{\scr{D}}
\newcommand{\sE}{\scr{E}}
\newcommand{\sF}{\scr{F}}
\newcommand{\sG}{\scr{G}}
\newcommand{\sH}{\scr{H}}
\newcommand{\sJ}{\scr{J}}
\newcommand{\sK}{\scr{K}}
\newcommand{\sL}{\scr{L}}
\newcommand{\sM}{\scr{M}}
\newcommand{\sO}{\scr{O}}
\newcommand{\sP}{\scr{P}}
\newcommand{\sQ}{\scr{Q}}
\newcommand{\sS}{\scr{S}}
\newcommand{\sT}{\scr{T}}
\newcommand{\sW}{\scr{W}}
\newcommand{\cC}{\mathcal C}
\newcommand{\bA}{\mathbb{A}}
\newcommand{\bB}{\mathbb{B}}
\newcommand{\bC}{\mathbb{C}}
\newcommand{\bD}{\mathbb{D}}
\newcommand{\bE}{\mathbb{E}}
\newcommand{\bF}{\mathbb{F}}
\newcommand{\bG}{\mathbb{G}}
\newcommand{\bN}{\mathbb{N}}
\newcommand{\bP}{\mathbb{P}}
\newcommand{\bQ}{\mathbb{Q}}
\newcommand{\bR}{\mathbb{R}}
\newcommand{\bX}{\mathbb{X}}
\newcommand{\bZ}{\mathbb{Z}}
\theoremstyle{plain}
\newtheorem{thm}{Theorem}[section]
\newtheorem{defn}[thm]{Definition}
\newtheorem{lem}[thm]{Lemma}
\newtheorem{prop}[thm]{Proposition}
\theoremstyle{remark}
\newtheorem{claim}[thm]{Claim}
\newtheorem{c-n-d}[thm]{Claim and Definition}
\newtheorem{explanation}[thm]{Explanation}
\newtheorem{notation}[thm]{Notation}
\newtheorem{rem}[thm]{Remark}
\newtheorem*{rem-nonumber}{Remark}
\newtheorem{setting}[thm]{Setting}
\numberwithin{equation}{thm}
\setlist[enumerate]{label=(\thethm.\arabic*), before={\setcounter{enumi}{\value{equation}}}, after={\setcounter{equation}{\value{enumi}}}}
\newcommand{\into}{\hookrightarrow}
\newcommand{\wtilde}{\widetilde}
\newcommand{\what}{\widehat}
\newcommand\CounterStep{\addtocounter{thm}{1}\setcounter{equation}{0}}
\newcommand{\factor}[2]{\left. \raise 2pt\hbox{$#1$} \right/\hskip -2pt\raise -2pt\hbox{$#2$}}
\newcommand{\Preprint}[1]{#1}
\newcommand{\Publication}[1]{}
\newcommand{\subversionInfo}{}
\newcommand{\svnid}[1]{}
\newcommand{\approvals}[2][Approval]{}
\renewcommand{\phi}{\varphi}
\tikzset{commutative diagrams/arrow style=tikz}
\author{Stefan Kebekus}
\address{Stefan Kebekus, Mathematisches Institut, Albert-Ludwigs-Universität Freiburg, Ernst-Zermelo-Straße 1, 79104 Freiburg im Breisgau, Germany}
\email{\href{mailto:stefan.kebekus@math.uni-freiburg.de}{stefan.kebekus@math.uni-freiburg.de}}
\urladdr{\url{https://cplx.vm.uni-freiburg.de}}
\author{Erwan Rousseau}
\address{Erwan Rousseau, Univ Brest, CNRS UMR 6205,	Laboratoire de Mathematiques de Bretagne Atlantique\\ F-29200 Brest, France}
\email{\href{mailto:erwan.rousseau@univ-brest.fr}{erwan.rousseau@univ-brest.fr}}
\urladdr{\href{http://eroussea.perso.math.cnrs.fr}{http://eroussea.perso.math.cnrs.fr}}
\thanks{This work started during the visit of Erwan Rousseau to the Freiburg
Institute for Advanced Studies, supported by the European Unions Horizon 2020
research and innovation program under the Marie Sklodowska-Curie grant agreement
No 75434.  Rousseau thanks the Institute for providing an excellent working
environment}
\keywords{$\cC$-pairs, Albanese morphism, hyperbolicity, Nevanlinna theory}
\subjclass[2020]{32C99, 32H99, 32A22}
\title{Entire curves in $\cC$-pairs with large irregularity}
\date{\today}
\DeclareMathOperator{\alb}{alb}
\DeclareMathOperator{\Alb}{Alb}
\DeclareMathOperator{\diff}{d}
\DeclareMathOperator{\Div}{Div}
\DeclareMathOperator{\Fix}{Fix}
\DeclareMathOperator{\ord}{ord}
\def\tsum{\mathop{\textstyle\sum}}
\def\ae{\hspace{.5cm}\Vert}
\theoremstyle{plain}
\theoremstyle{remark}
\newtheorem{conj}[thm]{Conjecture}
\newtheorem{reminder}[thm]{Reminder}
\newtheorem{setnot}[thm]{Setting and Notation}
\newtheorem{notcho}[thm]{Notation and Choice}
\begin{document}

\approvals[Approval for Abstract]{Erwan & yes \\ Stefan & yes}
\begin{abstract}
\selectlanguage{british}

This paper extends the fundamental theorem of Bloch-Ochiai to the context of
$\cC$-pairs: If $(X, D)$ is a $\cC$-pair with large irregularity, then no entire
$\cC$-curve in $X$ is ever dense.  In its most general form, the paper's main
theorem applies to normal Kähler pairs with arbitrary singularities.  However,
it also strengthens known results for compact Kähler manifolds without boundary,
as it applies to some settings that the classic Bloch-Ochiai theorem does not
address.

The proof builds on work of Kawamata, Ueno, and Noguchi, recasting parabolic
Nevanlinna theory as a ``Nevanlinna theory for $\cC$-pairs''.  We hope the
approach might be of independent interest.

\end{abstract}

\maketitle
\tableofcontents

%
%
\svnid{$Id: 01-intro.tex 950 2024-11-11 12:47:00Z kebekus $}
\selectlanguage{british}

\section{Hyperbolicity properties of pairs with large irregularity}
\subversionInfo

\subsection{Degeneracy of entire curves}
\approvals{Erwan & yes \\ Stefan & yes}

The Albanese variety is a fundamental tool in the study of entire curves (or
rational points) in projective varieties.  Its usefulness is illustrated by the
Bloch-Ochiai Theorem.

\begin{thm}[\protect{Bloch-Ochiai Theorem, \cite[Thm.~2]{Kawa80}}]\label{thm:1-1}%
  Let $X$ be a projective manifold.  If the irregularity of $X$ is larger than
  the dimension, $q(X) > \dim X$, then every entire curve $ℂ → X$ is
  algebraically degenerate.  \qed
\end{thm}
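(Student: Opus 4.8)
The plan is to reduce the assertion to the structure theorem for entire curves in abelian varieties by means of the Albanese morphism. Write $A := \Alb(X)$ and let $\alpha \colon X \to A$ denote the Albanese morphism, normalised so that a fixed base point of $X$ is sent to $0 \in A$. By construction $\dim A = q(X)$, whereas $\dim \alpha(X) \leq \dim X < q(X)$ by hypothesis; in particular the image $Y := \alpha(X)$ is a proper closed algebraic subset of $A$.

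Now let $f \colon ℂ \to X$ be an arbitrary entire curve and consider the composition $g := \alpha \circ f \colon ℂ \to A$, an entire curve whose image lies in $Y$. The analytic heart of the matter is the classical Bloch--Ochiai theorem for \emph{abelian} varieties, which underlies \cite[Thm.~2]{Kawa80} and which I would invoke in the following form: the Zariski closure $Z$ of $g(ℂ)$ in $A$ is a translate $a + B$ of an abelian subvariety $B \subseteq A$. A self-contained proof of this input follows the classical route --- pull the $q(X)$ linearly independent holomorphic one-forms on $X$ back along $f$, form the associated ``derived curve'' into $\bP\bigl(H^0(X, \Omega^1_X)^\vee\bigr)$, and combine Nevanlinna's lemma on the logarithmic derivative with Borel's lemma --- and this is the step I expect to carry essentially all of the difficulty; everything else is formal.

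Granting the structure theorem, the argument concludes quickly. Since $Z \subseteq Y \subsetneq A$, the translate $a + B$ is a proper subvariety of $A$, hence $B$ is a \emph{proper} abelian subvariety. Because $\alpha\bigl(f(ℂ)\bigr) = g(ℂ) \subseteq a+B$, we have $f(ℂ) \subseteq \alpha^{-1}(a+B)$, a closed algebraic subset of $X$, so it suffices to verify $\alpha^{-1}(a+B) \neq X$. If instead $\alpha^{-1}(a+B) = X$, then $\alpha(X) \subseteq a+B$; since the base point maps to $0 \in a+B$ this forces $a \in B$, hence $a+B = B$ and $\alpha(X) \subseteq B$. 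Then $\alpha$ factors through the inclusion $B \hookrightarrow A$, and the universal property of the Albanese forces $B = A$, contradicting the properness of $B$. Therefore $f(ℂ)$ is contained in a proper subvariety of $X$; that is, $f$ is algebraically degenerate. \qed
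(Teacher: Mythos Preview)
Your argument is correct. Both you and the paper begin with the Albanese morphism $\alpha: X \to A = \Alb(X)$ and observe that $\alpha(X)$ is a proper subvariety of $A$; both then appeal to a deep Nevanlinna-theoretic input about entire curves in abelian varieties. The packaging of that input differs, however. The paper's sketch follows Kawamata--Ueno: one passes to the quotient $B = A/I$ by the stabilizer $I$ of $\alpha(X)$, so that the image of $X$ in $B$ is of general type with generically injective Albanese, and then invokes Theorem~\ref{thm:1-3}. You instead invoke the structure theorem directly in $A$ --- the Zariski closure of any entire curve in an abelian variety is a translate of an abelian subvariety --- and finish by the generation property of the Albanese image. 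The two routes are logically equivalent and rest on the same analytic core (the lemma on the logarithmic derivative); your version is shorter and avoids the Ueno fibration, while the paper's version isolates a statement (Theorem~\ref{thm:1-3}) that generalizes cleanly to the logarithmic and $\cC$-pair settings treated later.
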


\begin{reminder}
  An entire curve is a holomorphic morphism $ℂ → X$.  An entire curve is
  algebraically degenerate if the image of $ℂ$ is not Zariski dense in $X$.
\end{reminder}

We recall the main ideas of the proof: start with the Albanese morphism $a: X →
A$ and let $I ⊆ a(X)$ be the largest Abelian subvariety of $A$ whose action
stabilizes $\img(a)$ and consider the quotient $B := A/I$.  The image of $X$ in
$B$ is then of general type, which reduces the problem to a study of entire
curves in general type subvarieties of Abelian varieties.  With these
preparations, the Bloch-Ochiai Theorem~\ref{thm:1-1} is then an easy consequence
of the following result, which follows for instance from \cite[Thms.~4.8.2 and
2.5.4]{MR3156076}\footnote{See \cite[p.~155]{MR3156076} for further
explanation.}.

\begin{thm}[Entire curves in varieties of general type]\label{thm:1-3} %
  Let $W$ be a projective manifold of general type.  If the Albanese morphism
  $\alb(W): W → \Alb(W)$ is generically injective, then every entire curve $ℂ →
  W$ is algebraically degenerate.  \qed
\end{thm}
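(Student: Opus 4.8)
The plan is to replace $W$ by its Albanese image and to invoke there the structure theory of entire curves in Abelian varieties. I would write $A := \Alb(W)$, $a := \alb(W) : W → A$ and $W' := a(W) ⊆ A$; since $W$ is projective, $a$ is a morphism of projective varieties, and, being generically injective, it is birational onto its image $W'$. The argument then runs by contradiction: suppose some entire curve $f : ℂ → W$ has Zariski-dense image in $W$.

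The key step is to push this curve down. Consider $g := a \circ f : ℂ → A$, an entire curve in the Abelian variety $A$. As $W$ is complete, $a$ is proper, hence closed, so $a\bigl(\overline{f(ℂ)}\bigr)$ is a closed set containing $g(ℂ)$; together with the continuity of $a$ this gives $\overline{g(ℂ)} = a\bigl(\overline{f(ℂ)}\bigr) = a(W) = W'$. I would then invoke the structure theorem for entire curves in Abelian varieties (due to Bloch, Ochiai, Kawamata, Green-Griffiths, Noguchi and Siu-Yeung; see \cite[Thms.~4.8.2 and 2.5.4]{MR3156076}): the Zariski closure of an entire curve in an Abelian variety is a translate of an Abelian subvariety. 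Hence $W'$ is isomorphic to an Abelian variety, so $\kappa(W') = 0$; since $W$ is smooth and birational to $W'$, the birational invariance of the Kodaira dimension forces $\kappa(W) = 0$. This contradicts $W$ being of general type, i.e., $\kappa(W) = \dim W ≥ 1$ (the case $\dim W = 0$ being a trivial convention-dependent exception). Thus $f$ has no Zariski-dense image, which is the assertion.

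The only substantial ingredient is the cited structure theorem for entire curves in Abelian varieties --- the entire-curve analogue of Lang's conjecture for subvarieties of Abelian varieties, and the place where all the hard analysis sits; everything else is the formal reduction along the Albanese morphism, the birational invariance of the Kodaira dimension, and the triviality that Abelian varieties have vanishing Kodaira dimension. I would stress that the (possible) singularities of $W'$ never enter: in the end one only compares the smooth manifold $W$ with the smooth Abelian variety $W'$. Alternatively one can organise the deduction exactly along the two cited results --- Ueno's fibration theorem \cite[Thm.~2.5.4]{MR3156076} splits off the maximal translated subtorus stabilising $W'$, which is trivial precisely because $W'$ is of general type, and then \cite[Thm.~4.8.2]{MR3156076} rules out Zariski-dense entire curves in the resulting general-type base --- but this is the same argument merely rearranged.
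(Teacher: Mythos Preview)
Your proposal is correct and matches the paper's treatment: the paper does not give an independent proof of this theorem but simply records it as a consequence of \cite[Thms.~4.8.2 and 2.5.4]{MR3156076}, and your argument is precisely the standard unpacking of that citation---push the entire curve into $\Alb(W)$, apply the structure theorem that the Zariski closure of an entire curve in an Abelian variety is a translated Abelian subvariety, and derive a Kodaira-dimension contradiction. Your alternative organisation via the Ueno fibration is exactly the route the paper's double citation points to, so nothing is missing or different.
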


Theorems~\ref{thm:1-1} and \ref{thm:1-3} have both been generalized to the
setting of logarithmic pairs $(X,D)$, where $X$ is a compact Kähler manifold and
$D$ is a reduced divisor (not necessarily with simple normal crossing support).
We refer the reader to \cite[Sect.~4.8 and Thm.~4.8.17]{MR3156076} for precise
statements and for a brief history of the problem.

\subsection{Degeneracy of $\cC$-entire curves}
\approvals{Erwan & yes \\ Stefan & yes}

This paper generalizes Theorem~\ref{thm:1-1} to the setting of $\cC$-pairs
$(X,D)$, where $X$ is a normal Kähler space and $D$ is a Weil $ℚ$-divisor of the
form
\[
  D = \sum_i \frac{m_i - 1}{m_i}·D_i, \quad \text{all } m_i ∈ ℕ^{≥ 2} ∪ \{ ∞ \}.
\]
Originally introduced under the name ``geometric orbifold'' by Campana,
$\cC$-pairs interpolate between compact spaces and spaces with logarithmic
boundary.  By now, $\cC$-pairs and the derived notions such as ``adapted
differentials'' and ``$\cC$-cotangent sheaves'' are standard tools in complex,
algebraic and arithmetic geometry, with applications ranging from the geometry
of moduli spaces to the study of rational points over function fields,
\cite{MR3949026, KPS22}.  We recall the relevant notions in brief and refer the
reader to \cite{orbiAlb1, orbiAlb2} for references and a detailed introduction.

\begin{defn}[Entire $\cC$-curve, algebraic degeneracy]
  Let $(X, D)$ be a $\cC$-pair.  An \emph{entire $\cC$-curve} is a holomorphic
  morphism of $\cC$-pairs, $(ℂ, 0) → (X, D)$.  An entire $\cC$-curve is
  \emph{algebraically degenerate} if the image of $ℂ$ is not Zariski dense in
  $X$.
\end{defn}

\begin{reminder}[Morphisms of $\cC$-pairs]
  Morphisms of $\cC$-pairs are formally introduced and discussed at length in
  \cite[Sect.~7ff]{orbiAlb1}.  In the simplest case, where $X$ is smooth and $D$
  is a smooth prime divisor of the form
  \[
    D = \frac{m_1 - 1}{m_1}·D_1, \quad \text{for } m_1 ∈ ℕ^{≥ 2},
  \]
  $\cC$-pairs impose tangency conditions reminiscent of (but different from) the
  morphism $γ : ℂ → X$ is a $\cC$-morphism between the $\cC$-pairs $(ℂ, 0)$ and
  tangency conditions imposed by root stacks, \cite{MR2306040}: a holomorphic
  $(X, D)$ if
  \[
    γ^* D ≥ m_i·\supp (γ^* D).
  \]
  In other words, every intersection point of curve $γ$ and the divisor $D$ must
  have multiplicity $m_i$ at least (and not necessarily divisible by $m_i$ as in
  the case of root stacks).
\end{reminder}

\begin{reminder}[Augmented Albanese irregularity]
  Let $(X, D)$ be a $\cC$-pair where $X$ is a compact Kähler space.  Generalizing
  the \emph{augmented irregularity} of a compact manifold,
  \cite[Sect.~6]{orbiAlb2} introduces the ``augmented Albanese irregularity''
  $q⁺_{\Alb}(X, D)$, with values in $ℕ ∪ \{ ∞ \}$.
\end{reminder}

\begin{thm}[\protect{$\cC$-version of the Bloch-Ochiai theorem, see Proposition~\ref{prop:5-2}}]\label{thm:1-7}%
  Let $(X, D)$ be a $\cC$-pair where $X$ is a compact Kähler space.  If
  $q⁺_{\Alb}(X, D) > \dim X$, then every $\cC$-entire curve $(ℂ,0) → (X,D)$ is
  algebraically degenerate.
\end{thm}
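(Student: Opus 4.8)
The plan is to run the argument sketched after Theorem~\ref{thm:1-1}, with the $\cC$-Albanese morphism of \cite{orbiAlb2} in place of the classical Albanese and the theorem of Noguchi, Winkelmann and Yamanoi on holomorphic curves in semi-tori in place of Theorem~\ref{thm:1-3}. Throughout, the only genuinely analytic input is that last theorem (which incorporates the classical Bloch-Ochiai-Kawamata result); everything else is a formal endgame, once the $\cC$-Albanese theory of \cite{orbiAlb1, orbiAlb2} is available.

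The first step is to pass to a $\cC$-étale cover on which the hypothesis becomes the non-augmented inequality $q_{\Alb} > \dim$. Since $q⁺_{\Alb}(X, D) > \dim X$, and since $q⁺_{\Alb}$ is (by its definition, generalising the augmented irregularity) a supremum of $q_{\Alb}$ over finite $\cC$-étale covers, there is such a cover $π : (X', D') → (X, D)$ with $q_{\Alb}(X', D') > \dim X' = \dim X$. As $ℂ$ is simply connected, every entire $\cC$-curve $γ : (ℂ, 0) → (X, D)$ factors as $γ = π ∘ γ'$ for an entire $\cC$-curve $γ' : (ℂ, 0) → (X', D')$; and since $π$ is finite, $\overline{γ(ℂ)}$ is contained in $π\bigl(\overline{γ'(ℂ)}\bigr)$, which is a proper subvariety of $X$ whenever $\overline{γ'(ℂ)}$ is a proper subvariety of $X'$. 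We may therefore assume from now on that $q_{\Alb}(X, D) > \dim X$.

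Next I would bring in the $\cC$-Albanese morphism $a := \alb(X, D) : (X, D) → A$, where $A$ is a semi-torus of dimension $q_{\Alb}(X, D)$, and use two of its properties. First, the universal property forces $a(X)$ to generate $A$, so the image $Y := \overline{a(X)}$ has $\dim Y ≤ \dim X < \dim A$ and is in particular a proper subvariety of $A$. Second — and this is the step where the "Nevanlinna theory for $\cC$-pairs" is needed — a holomorphic morphism of $\cC$-pairs pulls adapted differentials back to honest holomorphic differentials, so that $a ∘ γ : ℂ → A$ is an ordinary holomorphic curve in the semi-torus $A$. By the Noguchi-Winkelmann-Yamanoi structure theorem, the Zariski closure of $(a ∘ γ)(ℂ)$ is a translate $c + A'$ of a closed connected subgroup $A' ⊆ A$. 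Since $c + A' ⊆ Y$, there are two cases. If $c + A' ⊊ Y$, then $γ(ℂ) ⊆ a^{-1}(c + A')$, a Zariski-closed subset of $X$ that must be proper — otherwise $a(X) ⊆ c + A'$ would force $Y ⊆ c + A'$ — so $γ$ is algebraically degenerate. If instead $c + A' = Y$, then, after normalising so that $0 ∈ a(X) ⊆ Y = c + A'$, we get $c ∈ A'$ and hence $a(X) ⊆ A'$; since $a(X)$ generates $A$ this gives $A = A'$, so $\dim A = \dim Y ≤ \dim X$, contradicting $q_{\Alb}(X, D) > \dim X$. Thus only the first case occurs, and the theorem follows.

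The main obstacle is the second ingredient above. One must construct, over a possibly singular normal compact Kähler space, a $\cC$-Albanese morphism with the expected universal property and good behaviour under $\cC$-étale covers, and then show that composing it with a $\cC$-entire curve genuinely produces a holomorphic curve into a semi-torus to which the Noguchi-Winkelmann-Yamanoi theorem applies. Carrying this out — in effect, recasting parabolic Nevanlinna theory as a Nevanlinna theory for $\cC$-pairs, following Kawamata, Ueno and Noguchi — is the substantive part; the group-theoretic dichotomy in the previous paragraph is then routine.
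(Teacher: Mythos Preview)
Your lifting step is where the argument breaks. The covers $\gamma : \what{X} \to X$ that enter the definition of $q^+_{\Alb}$ are finite covers, typically ramified along $\supp D$, and a $\cC$-entire curve $\varphi : \bC \to X$ does \emph{not} in general lift holomorphically to $\what{X}$: the $\cC$-condition requires only $\varphi^* D_i \ge m_i \cdot \supp(\varphi^* D_i)$, not divisibility by $m_i$ --- the paper stresses precisely this distinction in the Reminder on morphisms of $\cC$-pairs. Concretely, if $m_i = 2$ and $\varphi$ meets $D_i$ with local multiplicity $3$, then $\varphi$ is a legitimate $\cC$-curve but admits no holomorphic lift through a double cover branched along $D_i$. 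Simple connectivity of $\bC$ buys you lifts only through unramified covers, and these are not the covers that realise $q^+_{\Alb}$.

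The paper confronts this head-on: it forms the normalised fibre product $V := \bC \times_{X} \what{X}$, a \emph{branched} cover $\rho : V \to \bC$ rather than a copy of $\bC$, and runs parabolic Nevanlinna theory on $V$ (Section~3). The price is that Noguchi's degeneracy criterion (Proposition~\ref{prop:4-3}) now needs the estimate $N_{\Ramification \rho}(\bullet) \le \varepsilon \cdot T(\bullet, g, L)$ for every $\varepsilon > 0$, and the substantive work of Section~5 is to secure it. For each cyclic isotropy group $H$ of the Galois action one produces a logarithmic $1$-form on the semitoric target that is an $H$-eigenvector with non-trivial character (Section~\ref{sec:2} and Claim~\ref{claim:5-13}); its pull-back to $V$ is then forced to vanish at every $H$-fixed point, so $\supp(\Ramification \rho)$ sits in the zero locus of finitely many such pull-backs (Claims~\ref{claim:5-15}--\ref{claim:5-16}), and Theorem~\ref{thm:4-1} applies. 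The structure theorem for entire curves in semi-tori is used (inside the proof of Claim~\ref{claim:5-12}, via the logarithmic Bloch--Ochiai theorem), but by itself it leaves you stranded on $V$; it is the control of $\Ramification \rho$ that bridges back to degeneracy on $\bC$, and your outline does not provide it.
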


\begin{rem}[Assumptions in Theorem~\ref{thm:1-7}]
  Theorem~\ref{thm:1-7} does \emph{not} assume that $X$ is smooth, but the
  definition of $\cC$-pair does require that $X$ is normal,
  \cite[Def.~2.13]{orbiAlb1}.  Theorem~\ref{thm:1-7} explicitly covers the case
  where $q⁺_{\Alb}(X, D) = ∞$.
\end{rem}

\begin{rem}[Comparison with Theorem~\ref{thm:1-1}]
  The irregularity of a compact Kähler manifold $X$ is never larger than the
  augmented Albanese irregularity of the trivial $\cC$-pair $(X,0)$,
  \begin{equation}\label{eq:1-9-1}
    q(X) ≤ q⁺_{\Alb}(X, 0).
  \end{equation}
  Even in case where $X$ is smooth and $D = 0$, Theorem~\ref{thm:1-7} is
  therefore a priori stronger than Theorem~\ref{thm:1-1} (and its logarithmic
  version \cite[Thm.~4.8.17]{MR3156076}) and can prove hyperbolicity in settings
  where Theorem~\ref{thm:1-1} does not apply.
  
  A very first example is found in \cite[Sect.~5]{Cam05}, where Campana
  constructs a simply connected surface $X$ of general type, with $q(X) = 0$ and
  $q⁺_{\Alb}(X, 0) = ∞$.  While hyperbolicity of Campana's surface can arguably
  be established through arguments more elementary than Theorem~\ref{thm:1-7},
  we strongly expect that variants of the construction will give interesting,
  higher-dimensional examples to which classic theory does not apply.  The
  reader is referred to \cite[Sect.~11]{KPS22} for a first generalization of
  Campana's idea.
\end{rem}

\begin{rem}[Theorem~\ref{thm:1-7} and the existence of an Albanese]
  Recall from \cite[Sect.~10]{orbiAlb2} that a $\cC$-pair has an Albanese with
  good universal properties if and only if its augmented Albanese irregularity
  is finite.  In this sense, Theorem~\ref{thm:1-7} can be seen as saying that
  either
  \begin{itemize}
    \item an Albanese of $\cC$-pair $(X,D)$ exists and has rather small
    dimension, or

    \item $(X,D)$ has strong hyperbolicity properties.
  \end{itemize}
\end{rem}

At its core, our proof of Theorem~\ref{thm:1-7} re-interprets parabolic
Nevanlinna theory as a ``Nevanlinna theory for $\cC$-pairs''.  We hope that the
reader might find this of independent interest.

\subsection{Perspective: Specialness and $\cC$-entire curves}
\approvals{Erwan & yes \\ Stefan & yes}

To put Theorem~\ref{thm:1-7} into perspective, we recall a famous conjecture of
Campana that relates specialness to the existence of dense entire $\cC$-curves.

\begin{conj}[\protect{Specialness and $\cC$-entire curves, \cite[Conj.~13.17]{MR2831280}}]\label{conj:1-11}%
  Let $(X, D)$ be a snc $\cC$-pair where $X$ is projective or compact Kähler.
  Then, the pair $(X, D)$ is special if and only if it admits a Zariski-dense
  entire $\cC$-curve.
\end{conj}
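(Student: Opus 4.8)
The plan is to attack the two implications of Conjecture~\ref{conj:1-11} separately, since they are of quite different character; I should say at the outset that this is a well-known open conjecture of Campana, so what follows is a sketch of a strategy rather than a complete argument, and in full generality both implications appear to be out of reach with present technology.

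For the implication ``admits a Zariski-dense entire $\cC$-curve $\Rightarrow$ special'' I would argue by contraposition, via Campana's core map. If $(X,D)$ is not special there is an orbifold fibration $c : (X,D) \dashrightarrow (Z, \Delta)$ onto a base of general type with $\dim Z > 0$; this is the defining feature of non-specialness. Composing a Zariski-dense entire $\cC$-curve $(\bC,0) \to (X,D)$ with $c$, and resolving indeterminacy, would produce a Zariski-dense entire $\cC$-curve in an orbifold of general type, contradicting the Green--Griffiths--Lang conjecture in the $\cC$-setting. So this direction reduces to orbifold GGL, and Theorem~\ref{thm:1-7} of the present paper is precisely one of the inputs one can feed in: whenever the core base $(Z,\Delta)$ has large augmented Albanese irregularity — for instance when it carries a generically finite map onto a subvariety of general type of a semi-abelian variety — the desired contradiction is available, and one hopes similarly to invoke the $\cC$-analogues of the classical results in dimensions one and two.

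For the converse, ``special $\Rightarrow$ admits a Zariski-dense entire $\cC$-curve'', the natural approach uses Campana's structure theory of special orbifolds: up to birational and orbifold-étale modification, a special orbifold is built by an iterated tower of orbifold fibrations whose general fibres are special of the two elementary types, namely rationally connected or with vanishing orbifold Kodaira dimension (conjecturally, orbifold-étale-covered by semi-abelian varieties). Each elementary building block carries a Zariski-dense entire $\cC$-curve for soft reasons — images of $\bC$ under rational parametrizations in the rationally connected case, and suitable perturbations of one-parameter subgroups in the $\kappa = 0$ case, chosen so as to meet the boundary with the multiplicities prescribed by the $\cC$-structure. One would then glue these fibrewise curves into a single dense entire $\cC$-curve on $X$ by a successive-approximation argument up the tower, at each stage controlling the tangency conditions coming both from $D$ and from the multiple fibres of the fibration in play.

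The hard part is unmistakably this gluing step in the converse direction. Already for $D = 0$, producing a dense entire curve on a tower of fibrations with $\kappa = 0$ fibres is open and delicate, and the $\cC$-refinement only makes it worse: the patching must respect prescribed intersection multiplicities with $D$ and with every multiple fibre occurring in the tower, which is exactly the feature distinguishing $\cC$-morphisms from ordinary morphisms (cf.\ the Reminder on morphisms of $\cC$-pairs above). A secondary but still serious obstacle is that even the ``$\Rightarrow$'' direction is conditional on orbifold GGL; Theorem~\ref{thm:1-7} and its companions only settle the cases in which the core base already carries enough Albanese-type positivity. Realistically, then, a proof proposal for Conjecture~\ref{conj:1-11} amounts to reducing it to orbifold Green--Griffiths--Lang on one side and to an orbifold analogue of the known patching constructions for dense entire curves on the other.
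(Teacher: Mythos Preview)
You have correctly identified the essential point: Conjecture~\ref{conj:1-11} is \emph{not} proved in the paper.  It is quoted from Campana's work \cite[Conj.~13.17]{MR2831280} and appears only in the ``Perspective'' Section~1.3, where it serves to situate Theorem~\ref{thm:1-7} --- the paper observes that a special Kähler $\cC$-pair satisfies $q^{+}_{\Alb}(X,D) \le \dim X$, so Theorem~\ref{thm:1-7} confirms the prediction of the conjecture in the regime $q^{+}_{\Alb}(X,D) > \dim X$.  There is therefore no ``paper's own proof'' to compare your proposal against.

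Your sketch is a fair summary of the expected landscape and is honest about its conditional nature.  Two small comments.  First, for the direction ``dense entire $\cC$-curve $\Rightarrow$ special'' you invoke the core map and reduce to orbifold Green--Griffiths--Lang on the base; that is indeed the standard heuristic, and Theorem~\ref{thm:1-7} is exactly the kind of unconditional input one has in the Albanese-rich situation, as you note.  Second, for the converse you rely on the conjectural structure theory of special orbifolds (in particular the $\kappa=0$ case being orbifold-étale-covered by a semi-abelian variety); be aware that this structural statement is itself open, so your argument is conditional on more than just the gluing step you flag as the main difficulty.  None of this is a gap in your reasoning --- you explicitly present a strategy rather than a proof --- but it is worth recording that both directions currently rest on major open problems, not just one.
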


\begin{rem}
  Even if one is only interested in the statement for varieties (that is, the
  case where $D=0$), the use of the word ``special'' implicitly turns
  Conjecture~\ref{conj:1-11} into a statement about $\cC$-pairs.  Any result
  towards Conjecture~\ref{conj:1-11} will necessarily need to take $\cC$-pairs
  into account.
\end{rem}

If $(X, D)$ is a special Kähler $\cC$-pair, we have seen in
\cite[Rem.~8.4]{orbiAlb2} that the augmented irregularity is bounded by the
dimension, $q⁺_{\Alb}(X, D)≤ \dim X$.  In particular, Conjecture~\ref{conj:1-11}
predicts that $\cC$-pairs with $q⁺_{\Alb}(X, D) > \dim X$ have no Zariski dense
entire curves.  This is exactly the content of Theorem~\ref{thm:1-7}.  In cases
where $X$ is smooth and $D$ is empty or where $D$ is reduced, this is exactly
the logarithmic analogue of the Bloch-Ochiai Theorem~\ref{thm:1-1}.  We refer
the reader to \cite[Thm.~4.8.17]{MR3156076} for details and for a discussion.

\subsection{Acknowledgements}
\approvals{Erwan & yes\\ Stefan & yes}

We would like to thank Oliver Bräunling, Lukas Braun, Michel Brion, Johan
Commelin, Andreas Demleitner, and Wolfgang Soergel for long discussions.  Pedro
Núñez pointed us to several mistakes in early versions of the paper.  Jörg
Winkelmann patiently answered our questions throughout the work on this project.

The proof of Theorem~\ref{thm:1-7} follows ideas of Kawamata, builds on work of
Ueno and uses Nevanlinna theory, as developed by Noguchi and others.

\subsection{Global conventions}
\approvals{Erwan & yes \\ Stefan & yes}

This paper works in the category of complex analytic spaces and follows the
notation of the standard reference texts \cite{CAS, DemaillyBook, MR3156076}.  An
\emph{analytic variety} is a reduced, irreducible complex space.  For clarity,
we refer to holomorphic maps between analytic varieties as \emph{morphisms} and
reserve the word \emph{map} for meromorphic mappings.

We use the language of $\cC$-pairs, as surveyed in \cite{orbiAlb1}, and freely
refer to definitions and results from \cite{orbiAlb1} throughout the present
text.  The same holds for the paper \cite{orbiAlb2}, which introduces the core
notion of a $\cC$-semitoric pair and constructs the Albanese of a $\cC$-pair.
The reader might wish to keep hardcopies of both papers within reach.

%
%
\svnid{$Id: 02-semitori.tex 923 2024-10-02 05:05:32Z kebekus $}
\selectlanguage{british}

\section{Cyclic group actions on semitoric varieties and differentials}
\subversionInfo
\approvals{Erwan & yes \\ Stefan & yes}
\label{sec:2}

We will later need several elementary statements about actions of finite, cyclic
groups on semitoric varieties.  While certainly known to experts, we were not
able to find a suitable reference and include a full proof below.  We refer the
reader to \cite[Def.~5.3.3]{MR3156076} for the definition of ``semitoric
varieties'', and to \cite[Sect.~3]{orbiAlb2} for a detailed discussion.

\begin{setnot}\label{setnot:2-1}%
  Let $A° ⊂ A$ be a positive-dimensional semitoric variety, and let $G ⊂ \Aut(A,
  Δ_A)$ be a non-trivial, finite, cyclic group.  Then, $G$ acts on the space of
  logarithmic differentials and decomposes this space into a direct sum of
  eigenspaces.  More precisely, there exists an identification $G = ℤ/(\ord G)$
  and a unique decomposition
  \begin{equation}\label{eq:2-1-1}
    H⁰\bigl( A,\, Ω¹_A(\log Δ_A) \bigr) = \bigoplus_{0 ≤ λ < \ord G} E_{G,λ},
  \end{equation}
  where $G$ acts on every summand $E_{G,λ}$ by homotheties of the form
  \begin{equation}\label{eq:2-1-2}
    \factor{ℤ}{(\ord G)} ⨯ E_{G,λ} → E_{G,λ}, \quad \Bigl( [ℓ], τ
    \Bigr) ↦ \exp \left(ℓ·λ·\frac{2π}{\ord G}· \sqrt{-1} \right)·τ.
  \end{equation}
  Fix the identification throughout.  Recalling from \cite[Prop.~3.15]{orbiAlb2}
  that the sheaf $Ω¹_A(\log Δ_A)$ is free, the decomposition~\eqref{eq:2-1-1}
  induces a decomposition of sheaves,
  \begin{equation}\label{eq:2-1-3}
    Ω¹_A(\log Δ_A) = \bigoplus ℰ_{G,λ} %
    \quad \text{and} \quad %
    𝒯_A(-\log Δ_A) = \bigoplus ℰ^*_{G,λ}.
  \end{equation}
\end{setnot}

\begin{rem}\label{rem:2-2}%
  The summands $ℰ^*_{G,•}$ of Setting~\ref{setnot:2-1} are free.  They can
  equivalently be described as
  \[
    ℰ^*_{G,λ} = \bigcap_{μ ≠ λ} \:\: \bigcap_{τ ∈ E_{G,μ}} \ker τ.
  \]
\end{rem}

\begin{rem}
  The summands $ℰ^*_{G,•}$ of Setting~\ref{setnot:2-1} are invariant under the
  action of $A°$.  Since $A°$ is commutative as a Lie group, its Lie bracket
  vanishes and the restriction of every summand to $A°$ is a foliation.
\end{rem}

If the cyclic group $G$ of Setting~\ref{setnot:2-1} acts on $A$ by translations
with elements of $A°$, then the induced action on the space of differentials is
trivial and $H⁰\bigl( A,\, Ω¹_A(\log Δ_A) \bigr) = E_{G,0}$.  Since this is
hardly interesting, we concentrate on the case where $G$ has a fixed point and
more relevant statements can be made.  The following result is certainly not the
best possible, but suffices for our purposes.

\begin{lem}\label{lem:2-4}%
  Assume Setting~\ref{setnot:2-1}.  If the $G$-action on $A°$ has a fixed point,
  then the leaves of $ℰ^*_{G,0}|_{A°}$ are contained in the translates of
  proper, quasi-algebraic sub-semitori of $A°$.
\end{lem}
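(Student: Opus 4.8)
The plan is to exhibit $\sE^*_{G,0}|_{A^\circ}$ as a translation-invariant foliation on $A^\circ$ whose tangent distribution, read through the trivialisation of $\Omega^1_A(\log \Delta_A)$ by global invariant forms, is a fixed linear subspace of $V := \operatorname{Lie}(A^\circ)$, and then to identify that subspace with the Lie algebra of a proper closed subgroup of $A^\circ$, using the fixed point of the $G$-action.

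First I would reduce to the case where the fixed point is the neutral element $0 \in A^\circ$. If $p \in A^\circ$ is a fixed point of the $G$-action, translation by $-p$ extends to an element of $\Aut(A, \Delta_A)$ that restricts to an automorphism of $A^\circ$ and acts trivially on $H^0\bigl(A, \Omega^1_A(\log \Delta_A)\bigr)$, since global logarithmic one-forms on a semitoric variety are translation-invariant. Conjugating $G$ by this translation therefore leaves the eigenspace decomposition~\eqref{eq:2-1-1}, the induced sheaf decomposition~\eqref{eq:2-1-3}, and hence the very sheaf $\sE^*_{G,0}$ unchanged, while moving the fixed point to $0$. So we may assume that $G$ fixes $0$.

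By rigidity of semitori, a generator $g$ of $G$ then acts on $A^\circ$ as an automorphism $\psi$ of complex Lie groups; let $\phi$ be its differential at $0$, a linear automorphism of $V$ of finite order. Note that $\phi \neq \Id$: otherwise $\psi$ would be the identity on the Zariski-dense set $A^\circ \subset A$, hence on $A$, contradicting that $G \subset \Aut(A, \Delta_A)$ is non-trivial. Using the description of $\sE^*_{G,0}$ in Remark~\ref{rem:2-2}, the trivialisation of $\Omega^1_A(\log \Delta_A)$ by invariant forms identifies the fibre of $\sE^*_{G,0}$ at every point of $A^\circ$ with the fixed subspace $V^G = \ker(\phi - \Id) \subsetneq V$; in particular, $\sE^*_{G,0}|_{A^\circ}$ is a translation-invariant distribution. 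Now put $H := \Fix(\psi)^\circ$, the identity component of the kernel of the homomorphism $\psi - \Id \colon A^\circ \to A^\circ$ (a homomorphism since $A^\circ$ is commutative). This is a proper, closed, connected algebraic subgroup of the semitorus $A^\circ$, hence itself a quasi-algebraic sub-semitorus, with $\operatorname{Lie}(H) = V^G$. Since $\sE^*_{G,0}|_{A^\circ}$ is the translation-invariant foliation whose tangent subalgebra is $\operatorname{Lie}(H)$, its leaf through a point $x \in A^\circ$ is the coset $x + H$, which proves (in fact refines) the claim.

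The step I expect to be the main obstacle is the appeal to rigidity: one must know that an automorphism of the semitorus $A^\circ$, viewed as an analytic variety, that fixes $0$ is automatically a homomorphism of algebraic groups. For abelian varieties this is classical; for semitori it follows from the universal property of the quasi-Albanese — the identity map being the quasi-Albanese of $A^\circ$ — but a direct reference would be preferable. A secondary point needing care is the identification $\operatorname{Lie}(\Fix(\psi)^\circ) = V^G$ together with the fact that the leaves are genuinely the full cosets $x + H$; this is exactly where the fixed-point hypothesis is used, since for a general affine $G$-action the relevant connected subgroup need not be closed, and one would only recover the quasi-algebraic sub-semitori appearing in the statement.
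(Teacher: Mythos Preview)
Your argument is correct and mirrors the paper's proof almost step for step: both conjugate by a translation to move the fixed point to the origin (using that translations act trivially on the invariant differentials), invoke that a generator of $G$ fixing $0$ then acts as a Lie-group homomorphism---the paper cites \cite[Prop.~3.18]{orbiAlb2} for this, which answers your rigidity concern---and identify the leaf through $0$ with a subset of $\ker(\psi - \Id_{A^\circ})$, a proper quasi-algebraic sub-semitorus by \cite[Facts~3.23 and 3.27]{orbiAlb2}. The only cosmetic difference is that the paper phrases the last step via the $G$-equivariant exponential map rather than directly computing $\operatorname{Lie}\bigl(\Fix(\psi)^\circ\bigr)$.
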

\begin{proof}
  Fix an element $0_{A°} ∈ A°$ in order to equip $A°$ with the structure of a
  Lie group.  Using that the foliation $ℰ^*_{G,0}|_{A°}$ is
  translation-invariant, it suffices to show that the leaf through $0_{A°}$ is
  contained in a proper, quasi-algebraic sub-semitorus of $A°$.  To this end,
  choose one $G$-fixed point $a ∈ A°$, choose a generator $h ∈ G$ and write
  \[
    η := L^{-1}_a◦h◦L_a ∈ \Aut(A, Δ_A),
  \]
  where $L_a ∈ \Aut(A, Δ_A)$ is the translation that sends $0_{A°}$ to $a$.
  Observe that the element $η ∈ \Aut(A, Δ_A)$ fixes $0_{A°}$.  The elements $h$
  and $η$ have the same order, and the associated cyclic groups $G = \langle
  h\rangle$ and $\langle η \rangle$ are thus canonically isomorphic.  Since
  translations act trivially on differentials, the actions of $G$ and $\langle η
  \rangle$ on $H⁰\bigl( A,\, Ω¹_A(\log Δ_A) \bigr)$ agree under this
  identification.  To prove Lemma~\ref{lem:2-4}, we can therefore replace $G$ by
  $\langle η \rangle$ and assume without loss of generality that $G ⊂ \Aut(A,
  Δ_A)$ fixes the point $0_{A°} ∈ A$ and therefore acts linearly on the tangent
  space $T_{A°}|_{\{0_{A°}\}}$.  We know what the action is: The
  Decomposition~\eqref{eq:2-1-3} induces a decomposition
  \[
    T_{A°}|_{\{0_{A°}\}} = \bigoplus T_{G,λ},
  \]
  and $G$ acts on every summand by homotheties of the form
  \[
    \factor{ℤ}{(\ord G)} ⨯ T_{G,λ} → T_{G,λ}, \quad \Bigl( [ℓ], \vec{v} \Bigr) ↦
    \exp \left(-ℓ·λ·\frac{2π}{\ord G}· \sqrt{-1} \right)·\vec{v}.
  \]
  In particular, $G$ acts trivially on the summand $T_{G,0}$.

  The exponential morphism $\exp : T_{A°}|_{\{0_{A°}\}} → A°$ of the Lie group
  $A°$ is a surjective, locally biholomorphic group morphism that is equivariant
  for the actions of $G$ on $T_{A°}|_{\{0_{A°}\}}$ and on $A°$, respectively.
  The image $\exp(T_{G,0})$ equals the leaf of $ℰ^*_{G,0}$ through $0$.  But the
  equivariant exponential morphism sends $G$-fixed points to $G$-fixed points.
  Recalling from \cite[Prop.~3.18]{orbiAlb2} that $h|_{A°} ∈ \Aut(A°)$ is a
  group morphism, this means that the leaf of $ℰ^*_{G,0}$ through $0_{A°}$ is
  then necessarily contained in
  \[
    \Fix(G) ∩ A° = \ker\bigl(h|_{A°} - \Id_{A°}\bigr) ⊆ A°.
  \]
  Recall from \cite[Facts~3.23 and 3.27]{orbiAlb2} that this is indeed a
  quasi-algebraic, proper sub-semitorus of $A°$.
\end{proof}

%
%
\svnid{$Id: 03-nevanlinna.tex 919 2024-09-30 14:16:54Z kebekus $}
\selectlanguage{british}

\section{Nevanlinna theory for branched covers of \texorpdfstring{$ℂ$}{ℂ}}
\subversionInfo
\approvals{Erwan & yes \\ Stefan & yes}

To prepare for the proof of Theorem~\ref{thm:1-7}, this section recalls a number
of useful results from Nevanlinna theory.  We refer the reader to \cite[Sect.~3
and p.~250f]{MR3331401} and \cite[Sect.~2.7]{MR3156076} for details and for a
well-written introduction to Nevanlinna theory for branched covers of $ℂ$.  To
begin, we fix setting and notation for the remainder of the present section.

\begin{setting}[Holomorphic cover of the complex plane]\label{set:3-1} %
  Let $V$ be a connected Riemann surface and $ρ : V \twoheadrightarrow ℂ$ be a
  cover (recall the convention \cite[Def.~2.21]{orbiAlb1} that covers are
  finite).  We denote the standard coordinate function on the complex line by $t
  ∈ H⁰ \bigl( ℂ,\, 𝒪_ℂ \bigr)$.  Given any real number $r ≥ 0$, let $Δ_r ⊂ ℂ$
  be the disk of radius $r$ and write $V_r := ρ^{-1}(Δ_r)$ for its preimage.
\end{setting}

\subsection{The Nevanlinna functions}
\approvals{Erwan & yes \\ Stefan & yes}

Maintain Setting~\ref{set:3-1}.  Aiming to generalize Bloch-Ochiai's
Theorem~\ref{thm:1-1}, we are interested in a criterion to guarantee that
holomorphic morphisms from $V$ to a projective manifold $Y$ are algebraically
degenerate.  The criterion, Theorem~\vref{thm:4-1}, builds on work of Noguchi
and makes heavy use the ``main Nevanlinna functions for the branched covering
$ρ$''.  We recall the definitions of the Nevanlinna functions and briefly state
their main properties and refer to \cite[Sect.~2.7]{MR3156076} for a more
detailed introduction.

\begin{reminder}[Counting functions]\label{remi:3-2} %
  In Setting~\ref{set:3-1}, let $H ∈ \Div(V)$ be any effective divisor.  We can
  then consider the following functions.
  \begin{align*}
    N_{H} : [1, ∞) & → ℝ^{≥ 0}, & r & ↦ \frac{1}{\degρ} \int_{1}^{r} \left( \tsum_{u ∈ V_s}\ord_{u} H \right) \frac{ds}{s} & & \text{Counting} \\
    N_{1,H} : [1, ∞) & → ℝ^{≥ 0}, & r & ↦ \frac{1}{\degρ} \int_{1}^{r} \left( \tsum_{u ∈ V_s}\min\{1,\ord_{u} H \} \right) \frac{ds}{s} & & \text{Truncated counting}
  \end{align*}
\end{reminder}

\begin{reminder}[Proximity and height functions]\label{remi:3-3} %
  In Setting~\ref{set:3-1}, let $g : V → Y$ be any non-constant morphism from
  $V$ to a projective manifold $Y$, equipped with a Hermitian line bundle $L :=
  \bigl( ℒ,\, |·|\bigr)$ and a section $σ ∈ H⁰\bigl( Y, ℒ \bigr)$ such that $σ◦
  g$ is not identically zero.  Writing $c_1(L)$ for the Chern form of the
  Hermitian bundle $L$, we consider the following functions,
  \begin{align*}
    m(•, g, L, σ) : [1, ∞) & → ℝ, & r & ↦ \frac{1}{\deg ρ} \int_{∂ V_r} \log\frac{1}{|σ◦g|} · ρ^* (d^c \log |t|²) & & \text{Proximity} \\
    T(•, g,L) : [1, ∞) & → ℝ, & r & ↦ \frac{1}{\deg ρ} \int_1^r \left( \int_{V_s} g^* c_1(L) \right) \frac{ds}{s} & & \text{Height}
  \end{align*}
\end{reminder}

\begin{rem}[Integral in the proximity function]
  The existence of the integral in the definition of $m(•, g, L, σ)$ is
  elementary, cf.~\cite[(2.3.30) and Sect.~2.7]{MR3156076}.  For the reader's
  convenience, we remark that our main reference, \cite{MR3156076}, writes $γ :=
  d^c \log |t|²$.  Our second main reference, \cite{MR780664}, uses the notation
  $η := ρ^* (d^c \log |t|²)$.  We will constantly use the fact that
  \begin{equation}\label{eq:3-4-1}
    \int_{∂ Δ_r} d^c \log |t|² = 1
    \quad\text{and hence}\quad
    \int_{∂ V_r} ρ^* (d^c \log |t|²) = \deg ρ.
  \end{equation}
\end{rem}

The following elementary facts are well-known to experts, cf.~\cite[p.~234 and
250]{MR3331401}.  \Preprint{We include full proofs for the reader's
convenience.}\Publication{The preprint version of this paper includes full
proofs for the reader's convenience.}

\begin{lem}[Boundedness of the proximity function]\label{lem:3-5} %
  The function $m(•, g, L, σ)$ of Reminder~\ref{remi:3-3} is bounded from below.
\end{lem}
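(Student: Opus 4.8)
The plan is to reduce the claim to the standard fact that the logarithm of the reciprocal of the modulus of a section is bounded below away from the zero set, and then to handle the contribution near the zeros of $\sigma\circ g$ using the sub-mean-value property of subharmonic functions together with the normalization in \eqref{eq:3-4-1}. First I would note that $\log\frac{1}{|\sigma\circ g|} = -\log|\sigma\circ g|$ is the negative of the logarithm of the modulus of the holomorphic function $\sigma\circ g$ on $V$ (working locally in trivializations of $\mathcal L$ and absorbing the smooth positive weight $|\cdot|$, which only changes things by a bounded continuous term since $Y$ is compact). Hence $\log\frac{1}{|\sigma\circ g|}$ is the sum of a superharmonic function (away from the zeros, where it tends to $+\infty$) and a bounded term; in particular it is bounded below by a constant $-C$ on all of $V$ outside small neighbourhoods of the (discrete) zero set, and near a zero it only gets larger.

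The cleaner route, which I would actually take, is this: write $u := \log\frac{1}{|\sigma\circ g|}$. Since $Y$ is compact and $\sigma$ is a fixed section, $|\sigma\circ g|$ is bounded above on $V$, say by $M$, so $u \geq -\log M$ pointwise on $V$. Therefore
\[
  m(r, g, L, \sigma) = \frac{1}{\deg\rho}\int_{\partial V_r} u\cdot \rho^*(d^c\log|t|^2)
  \;\geq\; \frac{-\log M}{\deg\rho}\int_{\partial V_r}\rho^*(d^c\log|t|^2)
  \;=\; -\log M,
\]
using \eqref{eq:3-4-1} and the positivity of the measure $\rho^*(d^c\log|t|^2)$ on $\partial V_r$. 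This is valid for every $r\in[1,\infty)$, which is exactly the assertion.

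The one point that requires a word of care — and the only place I expect any friction — is the claim that $|\sigma\circ g|$ is globally bounded on $V$, since $V$ itself is non-compact. This is fine because $\sigma\circ g$ factors as $g$ followed by evaluation of the section $\sigma$ and the Hermitian metric: the function $y\mapsto |\sigma(y)|$ is continuous on the compact manifold $Y$, hence bounded above by some $M$, and $|\sigma\circ g(v)| = |\sigma(g(v))| \leq M$ for all $v\in V$ regardless of where $g(v)$ lands. So no properness of $g$ or compactness of $V$ is needed. I would also remark, for the reader, that the integral defining $m(\cdot,g,L,\sigma)$ exists by the reference already cited, so there is no convergence issue hidden in the displayed inequality; the bound is simply the monotonicity of the integral against a positive measure of total mass $\deg\rho$.
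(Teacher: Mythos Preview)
Your ``cleaner route'' is precisely the paper's own argument: the continuous function $y \mapsto |\sigma(y)|$ on the compact space $Y$ is bounded above, hence the integrand $\log\frac{1}{|\sigma\circ g|}$ is bounded below pointwise, and Equation~\eqref{eq:3-4-1} gives the uniform lower bound for the integral. The first paragraph about sub-/superharmonicity is unnecessary detour that you rightly discard, and the third paragraph simply spells out what the paper leaves implicit.
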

\Preprint{
\begin{proof}
  This follows from Equation~\eqref{eq:3-4-1}, given that the continuous
  function $Y → ℝ^{≥ 0}$, $y → |σ(y)|$ on the compact space $Y$ is bounded from
  above.
\end{proof}
}

\begin{lem}[\protect{Independence on choice of metric}]\label{lem:3-6} %
  If the bundle $ℒ$ of Reminder~\ref{remi:3-3} carries two Hermitian metrics,
  $L_1 := \bigl( ℒ,\, |·|_1\bigr)$ and $L_2 := \bigl( ℒ,\, |·|_2\bigr)$, then
  \begin{align}
    \label{eq:3-6-1} m(•, g, L_1, σ) &= m(•, g, L_2, σ) + O(1) \\
    \label{eq:3-6-2} T(•, g, L_1) &= T(•, g, L_2) + O(1).
  \end{align}
\end{lem}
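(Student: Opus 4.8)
The plan is to use the elementary fact that two Hermitian metrics on one and the same holomorphic line bundle differ by a smooth positive conformal factor, which is bounded because $Y$ is compact. Concretely, I would write $|·|_1 = e^{-ψ}·|·|_2$ for a globally defined smooth function $ψ \colon Y → ℝ$ and put $C := \sup_Y |ψ| < ∞$. The whole statement then reduces to tracking this bounded function through the two Nevanlinna integrals of Reminder~\ref{remi:3-3}.

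For the proximity function I would subtract the two defining integrals directly. Since $\log\frac{1}{|σ◦g|_1} = (ψ◦g) + \log\frac{1}{|σ◦g|_2}$, one obtains
\[
  m(r, g, L_1, σ) - m(r, g, L_2, σ) = \frac{1}{\deg ρ}\int_{∂ V_r} (ψ◦g)·ρ^*\bigl(d^c\log|t|^2\bigr).
\]
By \eqref{eq:3-4-1} the positive measure $ρ^*(d^c\log|t|^2)$ has total mass $\deg ρ$ on $∂ V_r$, so the right-hand side is bounded in absolute value by $C$, uniformly in $r$. This gives \eqref{eq:3-6-1}.

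For the height function I would start from the curvature identity $c_1(L_1) - c_1(L_2) = dd^c w$, where $w$ is a fixed constant multiple of $ψ$ (the constant depending only on the normalisation of $d^c$ and $c_1$); in particular $w$ is smooth and bounded on $Y$. Pulling back along $g$ gives $g^* c_1(L_1) - g^* c_1(L_2) = dd^c(w◦g)$, and inserting this into the definition of $T(•, g, L)$ yields
\[
  T(r, g, L_1) - T(r, g, L_2) = \frac{1}{\deg ρ}\int_1^r \left(\int_{V_s} dd^c(w◦g)\right)\frac{ds}{s}.
\]
Now I would invoke the Jensen--Lelong ("calculus") formula for branched covers of $ℂ$, as recorded in \cite[Sect.~2.7]{MR3156076} and \cite{MR780664}: for a bounded smooth function $u$ on $V$ — here $u = w◦g$ — this iterated integral equals $\frac{1}{\deg ρ}\int_{∂ V_r} u·ρ^*(d^c\log|t|^2)$ minus the same expression at radius $1$, up to a term supported on $ρ^{-1}(0)$. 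Each of these contributions is $O(1)$ for exactly the reason used in the proximity estimate, so \eqref{eq:3-6-2} follows.

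The one point that needs care is this last step: one must be sure that the Jensen--Lelong formula on the branched cover $V$ holds with an error of size $O(1)$ when applied to a merely bounded smooth function, and in particular that neither the ramification of $ρ$ nor the behaviour near $ρ^{-1}(0)$ introduces unbounded terms. This is standard — it is precisely the content of the calculus lemma in the cited references — so I would quote it rather than reprove it, which then reduces the whole argument to the two one-line boundary estimates above.
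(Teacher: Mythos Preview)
Your argument is correct and follows essentially the same route as the paper: both treat \eqref{eq:3-6-1} by bounding the conformal factor and invoking \eqref{eq:3-4-1}, and both reduce \eqref{eq:3-6-2} to the Jensen-type identity that rewrites $\int_1^r\bigl(\int_{V_s} dd^c(w\circ g)\bigr)\frac{ds}{s}$ as a bounded boundary average. The only cosmetic differences are that the paper obtains the potential via the $\partial\bar\partial$-lemma rather than directly from the conformal factor, and carries out the Stokes/Fubini computation by hand instead of citing a Jensen--Lelong lemma; since the height integral starts at $s=1$, no contribution at $\rho^{-1}(0)$ actually appears, so your caveat there is harmless but unnecessary.
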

\Preprint{
\begin{proof}
  Equation~\eqref{eq:3-6-1} follows from \eqref{eq:3-4-1}, given that the two
  norm functions $|·|_1$ and $|·|_2 ∈ \cC⁰(ℒ)$ differ only by multiplication
  with the pull-back of a strictly positive function in $\cC⁰(V)$, which attains
  its minimum and maximum.

  The proof of \eqref{eq:3-6-2} is almost identical to \cite[proof of
  Lem.~3.1]{MR3331401}.  To begin, observe that $c_1(L_1)$ and $c_1(L_2)$ are
  smooth closed $(1,1)$-forms on $Y$ with identical cohomology class.  The
  difference $c_1(L_1) - c_1(L_2)$ is thus exact, and the $∂ \overline{∂}$-lemma
  yields a smooth function $φ$ on $V$ such that $c_1(L_1) - c_1(L_2) = d d^c φ$.
  We find
  \begin{align*}
    & T(r, g, L_1) - T(r, g, L_2) \\
    & \qquad = \frac{1}{\deg ρ} \int_1^r \left( \int_{V_s} g^* \bigl( c_1(L_1) - c_1(L_2)\bigr) \right) \frac{ds}{s} \\
    & \qquad = \frac{1}{\deg ρ} \int_1^r \left( \int_{V_s} dd^c (φ ◦ g) \right) \frac{ds}{s} \\
    & \qquad = \frac{1}{\deg ρ} \int_1^r \left( \int_{∂ V_s} d^c (φ ◦ g) \right) \frac{ds}{s} && \text{Stokes}\\
    & \qquad = \frac{1}{\deg ρ} \int_{V_r ∖ V_1 } d^c (φ ◦ g) Λ \frac{d|t◦ρ|}{|t◦ρ|} && \text{Fubini} \\
    & \qquad = \frac{1}{2·\deg ρ} \int_{V_r ∖ V_1 } d^c (φ ◦ g) Λ ρ^* (d \log |t|²) \\
    & \qquad = \frac{-1}{2·\deg ρ} \int_{V_r ∖ V_1 } d (φ ◦ g) Λ ρ^* (d^c \log |t|²) && d^c u Λ dv = d^c v Λ du \\
    & \qquad = \frac{-1}{2·\deg ρ} \int_{V_r ∖ V_1 } d \Bigl((φ ◦ g) · ρ^* (d^c \log |t|²)\Bigr) && dd^c \log |t|² = 0 \\
    & \qquad = \frac{-1}{2·\deg ρ} \Biggl(\int_{∂ V_r} (φ ◦ g)·ρ^*(d^c \log |t|²) \\
    & \qquad \qquad \qquad \qquad \qquad - \int_{∂ V_1} (φ ◦ g)·ρ^*(d^c \log |t|²) \Biggr) && \text{Stokes.}
  \end{align*}
  Since $φ$ is bounded as a continuous function on the compact manifold $Y$,
  Equation~\eqref{eq:3-4-1} implies that the integrals in the last line are
  bounded.
\end{proof}
}

\begin{lem}[Height function for ample divisor]\label{lem:3-7} %
  If the bundle $ℒ$ of Reminder~\ref{remi:3-3} is ample, then the height
  function tends to infinity.  More precisely, there exist $c⁺_1 ∈ ℝ⁺$ and $c_2
  ∈ ℝ$ such that
  \[
    T(r, g, L) ≥ c⁺_1·\log r + c_2, \quad\text{for every } r ≥ 1.
  \]
\end{lem}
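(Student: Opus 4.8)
The plan is to reduce to a positively curved metric and then to bound the inner integral in the definition of $T$ from below by a positive constant. Since $ℒ$ is ample on the projective manifold $Y$, ampleness provides a smooth Hermitian metric $|·|_0$ on $ℒ$ whose Chern form $ω := c_1\bigl(ℒ, |·|_0\bigr)$ is a positive $(1,1)$-form on $Y$. By Lemma~\ref{lem:3-6}, passing from $L$ to $L_0 := \bigl(ℒ, |·|_0\bigr)$ changes $T(•, g, L)$ only by an additive $O(1)$, which can be absorbed into $c_2$ at the very end. I may therefore assume from now on that $c_1(L) = ω > 0$ on $Y$.

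Next I would introduce the function $A : (0, ∞) → ℝ^{≥ 0}$, $A(s) := \int_{V_s} g^* ω$. It is finite because $V_s = ρ^{-1}(Δ_s)$ is relatively compact (the cover $ρ$ is finite, hence proper) while $g^*ω$ is smooth, and it is non-decreasing in $s$ because the sets $V_s$ increase with $s$ and $g^*ω ≥ 0$. The one substantive point is the strict inequality $A(1) > 0$: since $g$ is non-constant on the connected Riemann surface $V$, its differential is nonzero --- hence injective, as $\dim_ℂ V = 1$ --- on a dense open subset, and there $g^*ω$ is strictly positive because $ω$ is positive definite on $Y$; as $ρ$ is surjective the open set $V_1$ is non-empty, so it meets this dense subset and $A(1) = \int_{V_1} g^*ω > 0$.

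Finally, for every $r ≥ 1$, monotonicity of $A$ gives
\[
  T(r, g, L) = \frac{1}{\deg ρ} \int_1^r A(s)\,\frac{ds}{s} ≥ \frac{A(1)}{\deg ρ} \int_1^r \frac{ds}{s} = \frac{A(1)}{\deg ρ}·\log r ,
\]
so the claim holds with $c⁺_1 := A(1)/\deg ρ ∈ ℝ⁺$ and $c_2 := 0$; for the original metric one adds the $O(1)$ constant supplied by Lemma~\ref{lem:3-6} to $c_2$. I do not foresee a genuine obstacle here: the only step requiring care is verifying that $g^*ω$ does not vanish identically on $V_1$, and this is precisely where the non-constancy of $g$ and the surjectivity of $ρ$ enter.
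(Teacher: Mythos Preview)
Your proof is correct and follows essentially the same approach as the paper: reduce via Lemma~\ref{lem:3-6} to a metric with positive Chern form, then use monotonicity of $s \mapsto \int_{V_s} g^*c_1(L)$ to bound the inner integral below by its value at $s=1$, yielding $T(r,g,L) \ge \operatorname{const}^+ \cdot \log r$. Your argument is in fact more detailed than the paper's, which simply cites \cite[p.~234]{MR3331401} for the verbatim computation and does not spell out why $\int_{V_1} g^*c_1(L) > 0$.
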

\Preprint{
\begin{proof}
  Ampleness of $ℒ$ and Lemma~\ref{lem:3-6} allows replacing $|·|$ with a metric
  of positive Chern form.  The proof of \cite[p.~234]{MR3331401} will then apply
  verbatim:
  \[
    T(r, g, L) = \frac{1}{\deg ρ} \int_1^r \left( \int_{V_s} g^* c_1(L) \right) \frac{ds}{s} ≥ \frac{1}{\deg ρ} \int_1^r \left( \int_{V_1} g^* c_1(L) \right) \frac{ds}{s}
    = \operatorname{const}⁺·\log r \qedhere
  \]
\end{proof}
}

We also recall that the Nevanlinna functions of Reminders~\ref{remi:3-2} and
\ref{remi:3-3} are related to one another by the following fundamental result.

\begin{thm}[\protect{First main theorem, cf.~\cite[Thm.~2.7.4]{MR3156076}}]\label{thm:3-8} %
  In the setting of Reminder~\ref{remi:3-3}, let $D ∈ \Div(Y)$ be the
  zero-divisor of the section $σ$.  Then,
  \[
    T(•, g, L) = N_{g^*D}(•) + m(•, g, L, σ) + O(1).  \eqno \qed
  \]
\end{thm}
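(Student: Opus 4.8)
The plan is to deduce the identity from two classical ingredients: the Poincaré--Lelong formula on $V$ and a Jensen-type formula for the exhaustion $\{V_r\}_{r ≥ 1}$. For the first, I would pull the section $σ$ and the Hermitian metric of $L$ back along $g$; since $g$ is non-constant and $σ◦g \not\equiv 0$, the effective divisor $g^*D$ is well-defined, and Poincaré--Lelong yields the identity of currents on $V$
\[
  dd^c \log |σ◦g|² = [g^*D] - g^* c_1(L),
\]
where $[g^*D]$ is the integration current over $g^*D$. Writing $ψ := \log\frac{1}{|σ◦g|}$ for the locally integrable function whose boundary averages define the proximity function, this becomes $dd^c ψ = \tfrac{1}{2}·g^* c_1(L) - \tfrac{1}{2}·[g^*D]$.

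The second ingredient is the Jensen formula for the branched cover $ρ$: for a function $ψ$ on $V$ that is locally a difference of plurisubharmonic functions, so that $dd^c ψ$ is a signed measure, one has
\[
  \int_{∂ V_r} ψ·ρ^*(d^c\log|t|²) = \int_{∂ V_1} ψ·ρ^*(d^c\log|t|²) + 2\int_1^r \left( \int_{V_s} dd^c ψ \right) \frac{ds}{s}.
\]
This is the direct analogue of classical Jensen's formula on disks; it follows from Stokes' theorem, the co-area identity relating $\frac{ds}{s}$ to $ρ^*(d\log|t|²)$ along the fibres of $ρ$, and the relation $ρ^*(dd^c\log|t|²) = 0$ away from $ρ^{-1}(0)$ — essentially the computation already performed in the proof of Lemma~\ref{lem:3-6}, with the extra bookkeeping of the atomic part of $dd^c ψ$.

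Substituting the Poincaré--Lelong expression for $dd^c ψ$, the last term of the Jensen formula becomes
\[
  \int_1^r \left( \int_{V_s} g^* c_1(L) \right) \frac{ds}{s} - \int_1^r \left( \tsum_{u ∈ V_s} \ord_u g^*D \right) \frac{ds}{s} = \deg ρ · \bigl( T(r, g, L) - N_{g^*D}(r) \bigr),
\]
where I recognise the defining integrals of Reminders~\ref{remi:3-2} and~\ref{remi:3-3}. The boundary integral over $∂ V_r$ equals $\deg ρ · m(r, g, L, σ)$ by definition of the proximity function, while the boundary integral over $∂ V_1$ is a finite constant, i.e.\ $O(1)$. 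Dividing through by $\deg ρ$ and rearranging gives $T(•, g, L) = N_{g^*D}(•) + m(•, g, L, σ) + O(1)$.

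I expect the main obstacle to be the rigour of the Stokes/Jensen step at the points where $ψ$ fails to be smooth: the poles of $ψ$ along $\supp g^*D$, the branch points of $ρ$, and the fibre $ρ^{-1}(0)$. The standard remedy is to excise small coordinate disks around these points, apply Stokes on the complement, and let the radii shrink to $0$; the residues of the excised disks around points of $\supp g^*D$ reassemble precisely into the counting term $\tsum_{u ∈ V_s}\ord_u g^*D$, while the contributions at $ρ^{-1}(0)$ and at the branch points either cancel or are absorbed into the $O(1)$. This is exactly the content of \cite[Thm.~2.7.4]{MR3156076}, which one may alternatively invoke directly.
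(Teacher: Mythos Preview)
The paper does not supply its own proof of this statement: the theorem is stated with the citation to \cite[Thm.~2.7.4]{MR3156076} and closed immediately with \qed, so there is nothing to compare against beyond the reference itself. Your sketch is the standard argument---Poincaré--Lelong to express $dd^c\log|σ◦g|²$ as $[g^*D]-g^*c_1(L)$, followed by the Jensen-type integration formula for the exhaustion $\{V_r\}$---and is essentially what one finds in the cited source. The factors and signs line up correctly with the normalisations of Reminders~\ref{remi:3-2} and \ref{remi:3-3}, and your final remark that one may simply invoke \cite[Thm.~2.7.4]{MR3156076} directly is exactly what the paper does.
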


\subsection{The Lemma on logarithmic derivatives}
\approvals{Erwan & yes \\ Stefan & yes}

The next section develops a degeneracy criterion, Theorem~\ref{thm:4-1}, whose
proof uses a fundamental fact of Nevanlinna theory for branched covers of $ℂ$:
the ``Lemma on logarithmic derivatives''.  For the reader's convenience, we
briefly recall the statement.  The following notation will be used to compare
differentials on $V$ with the standard differential $\diff t$ on the complex
plane.

\begin{notation}[Differentials on $V$ and the standard differential on the complex line]\label{not:3-9} %
  In Setting~\ref{set:3-1}, observe that every meromorphic differential $τ ∈ H⁰
  \bigl( V,\, Ω¹_V ⊗ \sK_V \bigr)$ can be written as $ξ·(ρ^* d t)$, where $ξ ∈
  H⁰ \bigl( V, \sK_V \bigr)$ is meromorphic.  Writing $ξ =: \frac{τ}{ρ^* d t}$
  for ease of notation, we can thus define a morphism that takes meromorphic
  differentials to meromorphic functions,
  \[
    η : H⁰\bigl( V,\, Ω¹_V ⊗ \sK_V \bigr) → H⁰\bigl( V, \sK_V\bigr), \quad τ ↦
    \frac{τ}{ρ^* d t}.
  \]
\end{notation}

The Lemma on logarithmic derivatives views the meromorphic functions $ξ$ as
morphisms $ξ : V → ℙ¹$ and considers the proximity function with respect to the
standard Hermitian structure on the hyperplane bundle of $ℙ¹$.  The following
notation will be used.

\begin{notation}[Hermitian structure on the anti-tautological bundle]\label{not:3-10} %
  Denote the standard Hermitian structure on the hyperplane bundle of $ℙ¹$ by $H
  := (𝒪_{ℙ¹}(1), |·|)$.  Writing $z$ for the standard coordinate on $ℂ ⊂ ℙ¹$,
  we also consider the standard sections $σ_0, σ_{∞} ∈ H⁰\bigl( ℙ¹,\,
  𝒪_{ℙ¹}(1)\bigr)$, where $\operatorname{div}σ_{•} = •$, where $σ_0 = z·σ_{∞}$
  and
  \begin{equation}\label{eq:3-10-1}
    |σ_0(z)|² = \frac{|z|²}{|z|²+1}
    \quad\text{and}\quad
    |σ_{∞}(z)|² = \frac{1}{|z|²+1}.
  \end{equation}
\end{notation}

\begin{thm}[\protect{Lemma on logarithmic derivatives, \cite[Lem.~1.6]{MR780664}}]\label{thm:3-11} %
  In the setting of Reminder~\ref{remi:3-3}, assume that $ℒ$ is ample.  Given a
  reduced divisor $D ∈ \Div(Y)$ with $\img g ⊄ \supp D$ and a logarithmic
  differential $ω ∈ H⁰\bigl( Y,\, Ω¹_Y(\log D) \bigr)$, consider the meromorphic
  function $ξ := η\bigl( g^*ω\bigr)$, and view it as a morphism $ξ : V → ℙ¹$.
  If $ε > 0$ is any number, there exists an inequality of the following form,
  \begin{equation}\label{eq:3-11-1}
    m(•, ξ, H, σ_∞) ≤ ε·T(•, g, L) \ae.
  \end{equation}
\end{thm}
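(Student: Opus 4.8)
This is \cite[Lem.~1.6]{MR780664}; for the reader's convenience I would recall the argument, whose plan is to reduce everything to the classical Lemma on logarithmic derivatives for a \emph{single} meromorphic function on the branched cover $ρ$.  That baseline fact asserts that, for a non-constant meromorphic function $φ : V → ℙ¹$ with ``$t$-derivative'' $η(dφ)$ in the sense of Notation~\ref{not:3-9}, one has, for every $ε > 0$,
\[
  m\bigl(•,\, η(dφ)/φ,\, H,\, σ_∞\bigr) ≤ ε·T(•, φ, H) + O(1) \ae;
\]
see \cite[Sect.~2.7]{MR3156076}.  As a preliminary reduction I would pass to a log resolution $π : Y' → Y$ of $(Y, D)$, chosen to be an isomorphism over $Y ∖ \supp D$.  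Then $D' := \bigl(π^{-1}(\supp D)\bigr)_{\red}$ is snc, $ω$ pulls back to a section of $Ω¹_{Y'}(\log D')$, and $g$ admits a unique lift $g' : V → Y'$ --- which exists because $\img g ⊄ \supp D$ forces the generic point of the smooth curve $V$ into the isomorphism locus of $π$ --- with $\img g' ⊄ \supp D'$ and $η\bigl((g')^*π^*ω\bigr) = ξ$.  Since rational functions on $Y'$ coincide with those on $Y$, the rational functions that will enter the estimate below are pulled back from $Y$; by functoriality of the height and the ampleness of $ℒ$, their heights along $g$ are bounded by a fixed multiple of $T(•, g, L)$.  Replacing $(Y, D, ω, g)$ by $(Y', D', π^*ω, g')$, I may thus assume from now on that $D$ is snc.

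Next I would cover the compact manifold $Y$ by finitely many open sets $U_α$, each carrying holomorphic coordinates $w^α_1, …, w^α_n$ that are restrictions of rational functions $f^α_k$ on $Y$ and such that $D ∩ U_α$ is a union of coordinate hyperplanes, and fix a subordinate partition of unity $\{χ_α\}$ with $\supp χ_α ⊆ U_α$.  On $U_α$ one writes $ω = \sum_k a^α_k·\tfrac{dw^α_k}{w^α_k} + \sum_k b^α_k·dw^α_k$ with $a^α_k, b^α_k$ holomorphic.  The decisive point --- and the only place where the bound on the order of the poles of $ω$ is used --- is that the second sum can be absorbed into the first by writing $b^α_k·dw^α_k = (b^α_k w^α_k)·\tfrac{dw^α_k}{w^α_k}$, so that $ω|_{U_α} = \sum_k \wtilde a^α_k·\tfrac{dw^α_k}{w^α_k}$ with every $\wtilde a^α_k$ holomorphic --- hence bounded on $\supp χ_α$.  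Applying the partition of unity to $ξ = η(g^*ω)$ and pulling back then gives
\[
  ξ = \sum_α (χ_α◦g)·\sum_k (\wtilde a^α_k◦g)·\frac{η\bigl(d(f^α_k◦g)\bigr)}{f^α_k◦g},
\]
where the inner sum runs over those $k$ for which $f^α_k◦g$ is non-constant (for the remaining $k$ the summand is the pull-back of $b^α_k\,dw^α_k$ along a $g$ landing in a level set of $w^α_k$, and so vanishes); note that $f^α_k◦g \not\equiv 0$ whenever $\{w^α_k = 0\} ⊆ \supp D$, since $\img g ⊄ \supp D$.

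It then remains to estimate the proximity function.  Taking $\log⁺$ of the displayed identity and using that it is a finite sum whose coefficients $χ_α◦g$ and $\wtilde a^α_k◦g$ are bounded on $V$, one obtains $\log⁺|ξ| ≤ \sum_{α,k} \log⁺\bigl|η(d(f^α_k◦g))/(f^α_k◦g)\bigr| + O(1)$ pointwise on $V$; integrating over $∂ V_r$ against $ρ^*(d^c \log|t|²)$ yields
\[
  m(•, ξ, H, σ_∞) ≤ \sum_{α,k} m\bigl(•,\, η(d(f^α_k◦g))/(f^α_k◦g),\, H,\, σ_∞\bigr) + O(1).
\]
The baseline Lemma bounds each summand by $ε_0·T(•, f^α_k◦g, H) + O(1) \ae$, and, as noted above, $T(•, f^α_k◦g, H) ≤ C·T(•, g, L) + O(1)$ for a constant $C$ independent of $r$.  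Choosing $ε_0$ small enough in terms of $ε$, of $C$, and of the number of summands, and absorbing the remaining additive constant by means of Lemma~\ref{lem:3-7} (which forces $T(•, g, L) → ∞$), one concludes that $m(•, ξ, H, σ_∞) ≤ ε·T(•, g, L) \ae$, as desired.

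The deepest ingredient, the Lemma on logarithmic derivatives for branched covers of $ℂ$, is used here as a black box; within the reduction above, the step I expect to demand the most care is the coordinate setup --- arranging the cover so that the $w^α_k$ are simultaneously restrictions of rational functions on $Y$ (needed for functoriality of the height) \emph{and} exhibit $D$ as a union of coordinate hyperplanes (needed for the absorption identity $b\,dw = (bw)\tfrac{dw}{w}$), which is exactly what forces the preliminary reduction to the snc case.  A secondary, routine point is to verify that the finitely many exceptional sets produced by the applications of the baseline Lemma have finite total measure, so that the final estimate indeed holds outside a set of finite measure.
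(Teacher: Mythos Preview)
Your argument is correct but takes a different route from the paper.  The paper does not reprove Noguchi's lemma; it quotes \cite[Lem.~1.6]{MR780664} directly, which already furnishes for the branched cover $ρ$ an estimate
\[
  m(r,ξ) \le δ·\log r + 20·\log^+ T(r,g,L) + c \ae
\]
for every $δ\in(0,1)$, where $m(r,ξ)$ is Noguchi's proximity function.  The paper's actual work is then purely (i)~to check that $m(\bullet,ξ,H,σ_∞)$ agrees with Noguchi's $m(\bullet,ξ)$ up to $O(1)$, via the elementary bound $0\le\log\sqrt{r²+1}-\log^+ r\le\log\sqrt{2}$, and (ii)~to absorb both $δ\log r$ and $20\log^+ T(r,g,L)$ into $ε·T(r,g,L)$ using Lemma~\ref{lem:3-7}.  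You instead reconstruct the proof of Noguchi's lemma itself, reducing to the classical one-function LLD on $V$ via log resolution, rational local coordinates adapted to $D$, and a partition of unity.  Your approach is more self-contained and exposes the mechanism --- the absorption $b\,dw = (bw)\tfrac{dw}{w}$ is exactly where the log-pole hypothesis enters --- at the cost of the coordinate bookkeeping you yourself flag; the paper's approach is shorter and makes transparent that the only external inputs are Noguchi's estimate and the growth bound of Lemma~\ref{lem:3-7}.
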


\begin{reminder}[Notation used in \eqref{eq:3-11-1}]
  As usual in Nevanlinna theory, the symbol $\Vert$ in \eqref{eq:3-11-1} means
  that the inequality holds outside a subset of $[1, ∞)$ that is a union of
  (possibly infinitely many) intervals with finite total measure.  The subset
  may well depend on the number $ε$.
\end{reminder}

\begin{proof}[Proof of Theorem~\ref{thm:3-11}]
  \CounterStep Theorem~\ref{thm:3-11} is a reformulation of
  \cite[Lem.~1.6]{MR780664}.  To begin, observe that it follows from
  Lemma~\ref{lem:3-7} that the validity of Inequality~\eqref{eq:3-11-1} depends
  only on the classes of the functions $m(•, ξ, H, σ_∞)$ and $T(•, g, L)$,
  modulo addition of bounded functions.  We use this freedom in two ways.
  \begin{itemize}
    \item Using Lemma~\ref{lem:3-6}, we may replace the Hermitian metric on the
    ample bundle $ℒ$ and assume without loss of generality that $c_1(L)$ is a
    positive form on $V$.  This will later become relevant when we invoke
    \cite[Lem.~1.6]{MR780664}, where positivity of $c_1(L)$ is an implicit
    assumption\footnote{The sentence ``we assume that $Ω$ is the positive form
    associated with a Hermitian metric $h$ on $X$'' in \cite[p.~299]{MR780664}
    contains a misprint.  The symbol ``$X$'' should read ``$V$''.}.

    \item We may replace the proximity function $m(•, ξ, H, σ_∞)$ in
    \eqref{eq:3-11-1} with the simpler variant $m(•, ξ)$ used in Noguchi's
    paper.
  \end{itemize}
  We explain the second bullet item in detail and consider the estimates
  \begin{align}
    m(r, ξ, H, σ_∞) & = \frac{1}{\deg ρ} \int_{∂ V_r} \log\frac{1}{|σ_{∞}◦ξ|} · ρ^* (d^c \log |t|²) && \text{definition}\\
     & = \frac{1}{\deg ρ} \int_{∂ V_r} \log \sqrt{|ξ|²+1} · ρ^* (d^c \log |t|²) && \text{\eqref{eq:3-10-1}} \\
    \label{eq:3-13-3} & = \underbrace{\frac{1}{\deg ρ} \int_{∂ V_r} \log⁺ |ξ| · ρ^* (d^c \log |t|²)}_{=: m(r, ξ)\text{, as defined in \cite[p.~298]{MR780664}}} + O(1), && \text{see below}
  \end{align}
  where
  \[
    \log⁺ : ℝ → ℝ^{≥ 0}, \quad r ↦
    \begin{cases}
      0 & \text{if } r < 1 \\
      \log r & \text{otherwise}.
    \end{cases}
  \]
  The estimate \eqref{eq:3-13-3} follows from \eqref{eq:3-4-1} and from the
  elementary inequality
  \[
    0 ≤ \log \sqrt{r²+1} - \log⁺ r ≤ \log \sqrt{2}, \quad \text{for every } r ∈ ℝ^{≥ 0}.
  \]
  Wrapping up what we have shown so far: To prove Theorem~\ref{thm:3-11}, it
  suffices to show that for every $ε > 0$, there exists an inequality of the
  form
  \begin{equation}\label{eq:3-13-4}
    m(•, ξ) ≤ ε·T(•, g, L) \ae.
  \end{equation}
  choose $δ ∈ (0,1)$ such that $δ ≤ ε·c⁺_1$ and recall from \cite[Lem.~1.6 and
  Given one $ε$, consider the constants $c⁺_1$ and $c_2$ of Lemma~\ref{lem:3-7},
  proof on p.~302]{MR780664} that there exists a constant $c ∈ ℝ$ and an
  inequality of the form
  \begin{equation}\label{eq:3-13-5}
    m(• ,ξ) ≤ δ·\log • + 20·\log⁺ T(•, g, L) + c \ae.
  \end{equation}
  But given that $T(•, g, L)$ is monotonous and unbounded, the following will
  hold for all sufficiently large numbers $r ≫ 0$,
  \begin{align}
    \label{eq:3-13-6} 0 ≤ 20·\log⁺ T(r, g, L) & ≤ \frac{ε}{3}·T(r, g, L), \\
    \label{eq:3-13-7} c - \frac{δ·c_2}{c⁺_1} & ≤ \frac{ε}{3}·T(r, g, L).
  \end{align}
  For these numbers sufficiently large numbers $r$, the right side of
  \eqref{eq:3-13-5} then reads
  \begin{align*}
    & δ·\log r + 20·\log⁺ T(r, g, L) + c \\
    & \qquad = \frac{δ}{c⁺_1}(c⁺_1·\log r) + 20·\log⁺ T(r, g, L) + c \\
    & \qquad ≤ \frac{δ}{c⁺_1}·T(r, g, L) + 20·\log⁺ T(r, g, L) + c - \frac{δ·c_2}{c⁺_1} && \text{Lem.~\ref{lem:3-7}} \\
    & \qquad ≤ \frac{ε}{3}·T(r, g, L) + 20·\log⁺ T(r, g, L) + c - \frac{δ·c_2}{c⁺_1} && \text{choice of $δ$ and \eqref{eq:3-13-6}} \\
    & \qquad ≤ ε·T(r, g, L) && \text{\eqref{eq:3-13-6} and \eqref{eq:3-13-7}.}
  \end{align*}
  This establishes an inequality of the desired form \eqref{eq:3-13-4} and
  finishes the proof of Theorem~\ref{thm:3-11}.
\end{proof}

%
%
\svnid{$Id: 04-degeneracy.tex 913 2024-09-30 08:35:24Z kebekus $}
\selectlanguage{british}

\section{A degeneracy criterion for entire curves}
\subversionInfo
\approvals{Erwan & yes \\ Stefan & yes}

Building on work of Noguchi, this section establishes a criterion to guarantee
algebraic degeneracy of the morphism $g$ from Reminder~\ref{remi:3-3}.

\begin{thm}[Degeneracy criterion]\label{thm:4-1} %
  In the setting of Reminder~\ref{remi:3-3}, let $D ∈ \Div(Y)$ be a reduced
  divisor with snc support, such that the following holds.
  \begin{enumerate}
    \item\label{il:4-1-1} The Albanese morphism $\alb(Y,D)°$ of the log pair is
    generically finite.

    \item\label{il:4-1-2} The image of $\alb(Y,D)°$ is a variety of log-general
    type.

    \item\label{il:4-1-3} The image of $g$ does not intersect $D$.
  \end{enumerate}
  Suppose that there exists a reduced divisor\footnote{Note: We do \emph{not}
  assume that $D_1$ has snc support.} $D_1∈ \Div(Y)$ with $\img g ⊄ \supp D_1$
  and logarithmic differentials $ω_1, …, ω_l ∈ H⁰\bigl( Y,\, Ω¹_Y(\log D_1)
  \bigr)$ such that the associated meromorphic functions $ξ_i := η\bigl( g^*ω_i
  \bigr)$ are holomorphic and do not vanish identically.  If
  \begin{equation}\label{il:4-1-4}
    \supp \bigl(\Ramification ρ \bigr) ⊆ \bigcup_{i ∈ \{1,…,l\}} \{ v ∈ V \::\: ξ_i(v) = 0 \},
  \end{equation}
  then $g$ is algebraically degenerate.
\end{thm}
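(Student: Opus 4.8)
The plan is to argue by contradiction, reducing everything to the logarithmic Albanese. Suppose $g$ is not algebraically degenerate, i.e. $\overline{g(V)} = Y$; since $g$ is non-constant, $\dim Y ≥ 1$. Write $a := \alb(Y, D)° \colon Y → A°$ for the logarithmic Albanese morphism of the pair into the semi-abelian variety $A°$, fix a semi-toric compactification $A° ⊆ A$ with boundary $Δ_A$, and put $Z := \overline{a(Y)}$, $Δ_Z := Z ∩ Δ_A$, and $f := a◦g \colon V → A°$. By \ref{il:4-1-1} the morphism $a \colon Y → Z$ is generically finite, so $\dim Z = \dim Y$; by \ref{il:4-1-2} the pair $(Z, Δ_Z)$ is of log-general type; and $f$ is Zariski-dense because $g$ is. It suffices to show that $f$ is \emph{not} Zariski-dense in $Z$: then $a\bigl(\overline{g(V)}\bigr) ⊆ \overline{f(V)} ⊊ Z$ has dimension $< \dim Z = \dim Y$, contradicting $\overline{g(V)} = Y$. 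Hypothesis \ref{il:4-1-3} enters through the observation that, $a$ being the Albanese of $(Y,D)$, every translation-invariant $ω ∈ H⁰\bigl( A, Ω¹_A(\log Δ_A)\bigr)$ pulls back to $a^{*}ω ∈ H⁰\bigl( Y, Ω¹_Y(\log D)\bigr)$, whence $g^{*}(a^{*}ω)$ is holomorphic on $V$ (because $\img g ∩ D = ∅$) and $η\bigl(g^{*}(a^{*}ω)\bigr)$ has polar set contained in $\supp(\Ramification ρ)$.

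The decisive step is to show that the ramification of $ρ$ contributes negligibly. Fix an ample Hermitian line bundle $L$ on $Y$. For each $i$, Theorem~\ref{thm:3-11} applied to $ω_i$ and $D_1$ (legitimate since $\img g ⊄ \supp D_1$ and $L$ is ample) gives, for every $ε > 0$,
\[
  m(•, ξ_i, H, σ_∞) ≤ \tfrac{ε}{l}· T(•, g, L) \ae.
\]
As $ξ_i$ is holomorphic, its polar divisor as a morphism $V → ℙ¹$ is zero, so Theorem~\ref{thm:3-8} (for the sections $σ_∞$ and $σ_0$) together with Lemma~\ref{lem:3-5} yields (trivially when $ξ_i$ is constant)
\[
  N_{ξ_i^{-1}(0)}(•) ≤ T(•, ξ_i, H) + O(1) = m(•, ξ_i, H, σ_∞) + O(1).
\]
By \eqref{il:4-1-4}, $\supp(\Ramification ρ) ⊆ \bigcup_i \supp\bigl(ξ_i^{-1}(0)\bigr)$, so the truncated counting functions of Reminder~\ref{remi:3-2} obey $N_{1, \Ramification ρ} ≤ \sum_i N_{1, ξ_i^{-1}(0)} ≤ \sum_i N_{ξ_i^{-1}(0)}$, and combining the displays,
\[
  N_{1, \Ramification ρ}(•) ≤ ε· T(•, g, L) + O(1) \ae
\]
for every $ε > 0$, the exceptional set depending on $ε$ but having finite measure.

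With the ramification controlled, the remainder follows the Kawamata--Ueno strategy in the analytic form developed by Noguchi, cf.~\cite[Sect.~4.8]{MR3156076} and \cite{MR780664}: the Nevanlinna machinery recalled above is exactly what that argument requires, and the previous step shows the cover $ρ$ poses no obstruction. Replacing $A°$ by its quotient by the identity component of the stabilizer of $Z$ -- a proper sub-semitorus since $(Z,Δ_Z)$ is of log-general type with $\dim Z > 0$, and a replacement leaving $V$, $ρ$ and the estimate of Paragraph~2 untouched -- one reduces to the case where $Z$ has trivial stabilizer, so that the logarithmic Gauss map of $(Z, Δ_Z) ↪ (A, Δ_A)$ (well-defined because $Ω¹_A(\log Δ_A)$ is free) is generically finite onto its image. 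Since $f$ is Zariski-dense in $Z$, its composition with the Gauss map is expressed through $f$ and its iterated derivatives in a flat frame of $Ω¹_A(\log Δ_A)$; iterating Theorem~\ref{thm:3-11} bounds all proximity contributions of these derivatives by $ε· T(•, g, L)$ off a finite-measure set, and their counting contributions by $N_{1, \Ramification ρ} + O(1)$, hence again by $ε· T(•, g, L) + O(1)$ off a finite-measure set. Feeding this, together with a non-zero logarithmic jet differential on $(Z, Δ_Z)$ supplied by \ref{il:4-1-2}, into the tautological inequality produces an estimate $T(•, g, B) ≤ ε· T(•, g, L) + O(1) \ae$ for every $ε > 0$, where $B$ is the big class on $Y$ obtained by pulling $K_Z + Δ_Z$ back along the generically finite $a$. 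Since $B$ is big and $L$ ample, the left-hand side dominates a fixed positive multiple of $T(•, g, L)$ up to $O(1)$, so $T(•, g, L)$ is bounded -- contradicting Lemma~\ref{lem:3-7}, as $L$ is ample and $g$ non-constant.

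The genuinely new input is the ramification estimate of Paragraph~2, where hypothesis~\eqref{il:4-1-4} and the auxiliary forms $ω_i$ are used; the rest transcribes classical Bloch--Ochiai theory to branched covers. Accordingly I expect the main technical care to lie in Paragraph~3: organizing the iterated Lemma on logarithmic derivatives together with the logarithmic Gauss map and the jet-differential formalism so that the only surviving counting term is $N_{1, \Ramification ρ}$, and transporting the bigness of $K_Z + Δ_Z$ from $Z$ back to $Y$ across the merely generically finite Albanese morphism -- rather than in the ramification estimate itself.
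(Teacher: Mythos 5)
Your Paragraph~2 is, almost line by line, the paper's actual proof of Theorem~\ref{thm:4-1}: one bounds the counting function of $\Ramification ρ$ by $\sum_i N_{ξ_i^*\{0\}}$ using hypothesis~\eqref{il:4-1-4}, then bounds each summand by $m(•, ξ_i, H, σ_∞) + O(1)$ via two applications of the First Main Theorem~\ref{thm:3-8} together with Lemma~\ref{lem:3-5} and the fact that $ξ_i$ is holomorphic, and finally invokes the Lemma on logarithmic derivatives (Theorem~\ref{thm:3-11}) to get $N_{\Ramification ρ}(•) ≤ ε·T(•,g,L)$ for every $ε>0$ off a set of finite measure. (Two small points: the criterion is stated for the untruncated $N_{\Ramification ρ}$, so you should note $N_{\Ramification ρ} ≤ (\deg ρ)·N_{1,\Ramification ρ}$ as the paper does; and the reduction to ample $\sL$ should be made explicit, since none of the hypotheses mentions $L$.)

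The divergence is in your Paragraph~3, and there the proposal has a genuine gap. The paper does \emph{not} rerun the Kawamata--Ueno--Bloch machinery (stabilizer quotients, logarithmic Gauss map, jet differentials, tautological inequality); it feeds the ramification estimate into ``Noguchi's criterion'' (Proposition~\ref{prop:4-3}), whose proof is a three-line contradiction argument resting entirely on Noguchi's Second Main Theorem for branched covers \cite[Thm.~3.2 on p.~306]{MR780664}: under hypotheses \ref{il:4-1-1}--\ref{il:4-1-3}, a Zariski-dense $g$ satisfies $c^+_1·T(•,g,L) ≤ N_{\Ramification ρ}(•) + c^+_2·ε·\log • + c^+_3·\log^+ T(•,g,L) + c_4$, which is incompatible with $N_{\Ramification ρ} ≤ ε·T$. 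Your Paragraph~3 is in effect an attempt to reprove that theorem of Noguchi, and as written it is only an outline: the claims that the Gauss-map composition contributes counting terms bounded by $N_{1,\Ramification ρ}+O(1)$, and that the ``tautological inequality'' yields $T(•,g,B) ≤ ε·T(•,g,L)+O(1)$, are precisely the hard content of the Second Main Theorem and are asserted rather than established. You have correctly isolated the genuinely new ingredient (the ramification estimate via the auxiliary forms $ω_i$), but the remaining step should be discharged by citing Noguchi's theorem rather than by the sketch you give; completing that sketch honestly would amount to rewriting \cite{MR780664}.
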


\begin{explanation}
  Condition~\ref{il:4-1-2} might require a word of explanation.  To formulate
  the condition precisely, choose one Albanese and consider the map
  \[
    \alb(Y,D)°: Y° → \Alb(Y,D)° ⊂ \Alb(Y,D).
  \]
  Consider the toplogical closure $W := \overline{\img \alb(Y,D)°}$.  Observe
  that $W$ is analytic because $\alb(Y,D)°$ is quasi-algebraic, and write $W° :=
  W ∩ \Alb(Y,D)°$.  We obtain a tuple $(W, Δ)$ where $W$ is a (potentially
  non-normal) variety and $Δ = W ∖ W°$ is an analytic subset of pure codimension
  one.  Condition~\ref{il:4-1-2} says that one (equivalently: every)
  log-resolution of $(W,Δ)$ is of log-general type.  Since $\Alb(Y,D)$ is unique
  up to bimeromorphic equivalence, this condition does not depend on the choice
  made in the construction.
\end{explanation}

We prove Theorem~\ref{thm:4-1} in Section~\ref{sec:4-2} below.

\subsection{Noguchi's criterion}
\approvals{Erwan & yes \\ Stefan & yes}

The proof of Theorem~\ref{thm:4-1} relies on the following proposition.
Essentially due to Noguchi, it replaces Condition~\eqref{il:4-1-4} by an
inequality between Nevanlinna functions.  The interested reader might also want
to look at a related criterion of Yamanoi, \cite[Prop.~3.3]{MR2667786}, that is
stronger but works only in the compact case.

\begin{prop}[Noguchi's criterion]\label{prop:4-3} %
  In the setting of Reminder~\ref{remi:3-3}, let $D ∈ \Div(Y)$ be a reduced
  divisor with snc support, such that the following holds.
  \begin{enumerate}
    \item The Albanese morphism $\alb(Y,D)°$ of the log pair $(Y,D)$ is
    generically finite.

    \item\label{il:4-3-2} The image of $\alb(Y,D)°$ is a variety of log-general
    type.

    \item The image of $g$ does not intersect $D$.
  \end{enumerate}
  If the line bundle $ℒ ∈ \Pic(Y)$ is ample and if the inequality
  \begin{equation}\label{eq:4-3-4}
    N_{\Ramification ρ}(•) ≤ ε·T(•, g, L) \ae
  \end{equation}
  holds for every $ε >0$, then $g$ is algebraically degenerate.
\end{prop}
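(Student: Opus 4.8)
The plan is to deduce Proposition~\ref{prop:4-3} from Noguchi's degeneracy theorem for holomorphic maps from branched covers of $ℂ$ into semi-abelian varieties. The first step is to pass to the log-Albanese: write $A := \Alb(Y,D)°$ for the associated semi-abelian variety, fix a smooth equivariant compactification $A ⊆ \bar{A}$ whose boundary $\bar{A} ∖ A$ is an snc divisor, and consider the morphism $h := \alb(Y,D)° ◦ g : V → A$ together with $W := \overline{\img \alb(Y,D)°} ⊆ \bar{A}$. Since the image of $g$ is disjoint from $D$, the map $h$ factors through $W ∩ A$, so that $\img h$ avoids the boundary $\bar{A} ∖ A$; and since $\alb(Y,D)°$ is generically finite, $\dim W = \dim Y$. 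Resolving the indeterminacy of the meromorphic map that extends $\alb(Y,D)°$ to all of $Y$, and observing that its exceptional locus is a proper subvariety, one checks that the $\alb(Y,D)°$-preimage of any proper subvariety of $W$ — and likewise every fibre of $\alb(Y,D)°$ — is contained in a proper subvariety of $Y$. Hence, to prove Proposition~\ref{prop:4-3} it suffices to show that $\img h$ is not Zariski-dense in $W$.

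The second step is to transfer the hypothesis \eqref{eq:4-3-4} from the height of $g$ to the height of $h$. Fix an ample line bundle $M$ on $\bar{A}$ with a Hermitian metric; by Lemma~\ref{lem:3-7} the height $T(•, h, M)$ tends to infinity. To compare it with $T(•, g, L)$, choose a resolution $μ : Y' → Y$ on which $\alb(Y,D)°$ becomes a morphism $φ : Y' → \bar{A}$, and lift $g$ to $g' : V → Y'$, which is possible unless $g$ is already algebraically degenerate. Generic finiteness of $φ$ onto its image makes $φ^{*}M$ nef and big on $Y'$, so Kodaira's lemma gives $φ^{*}M = A' + E'$ with $A'$ an ample $ℚ$-divisor and $E'$ effective; we may assume $\img g' ⊄ \supp E'$, since otherwise $g$ is degenerate. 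Using boundedness of the proximity function (Lemma~\ref{lem:3-5}), independence of the height on the chosen metric (Lemma~\ref{lem:3-6}), the first main theorem~\ref{thm:3-8}, and the comparison of the nef-and-big classes $μ^{*}L$ and $φ^{*}M$ on $Y'$, one obtains a constant $c > 0$ with $T(•, g, L) ≤ c^{-1}·T(•, h, M) + O(1)$. Combined with \eqref{eq:4-3-4} and the unboundedness of $T(•, h, M)$, this yields
\[
  N_{\Ramification ρ}(•) ≤ ε·T(•, h, M) \ae
\]
for every $ε > 0$.

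The third step invokes Noguchi's theorem. Suppose, aiming for a contradiction, that $\img h$ were Zariski-dense in $W$. Then $W$ is a closed subvariety of the semi-abelian variety $A$ which, by Condition~\ref{il:4-3-2}, is of log-general type, while $h : V → A$ is a holomorphic map from the finite branched cover $ρ : V \onto ℂ$ with Zariski-dense image in $W$ and with ramification sub-linear in the height, as established in the previous step. This is exactly the configuration excluded by Noguchi's theorem on holomorphic curves in semi-abelian varieties of log-general type, in its form for branched covers of $ℂ$ (building on \cite{MR780664}; see also the treatment in \cite{MR3156076}): such a map must be algebraically degenerate. The contradiction shows that $\img h$ is not Zariski-dense in $W$, and therefore, by the first step, $g$ is algebraically degenerate.

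The only genuinely deep input is Noguchi's degeneracy theorem for branched covers mapping into semi-abelian varieties, which is used as a black box; everything else is bookkeeping. Within that bookkeeping, the point demanding the most care is the height comparison $T(•, g, L) = O\bigl(T(•, h, M)\bigr)$: because $\alb(Y,D)°$ is only generically finite, the pull-back $φ^{*}M$ need not be ample, and one must pass to a resolution and invoke Kodaira's lemma, while using throughout that any curve whose image lands in the exceptional or contracted locus is trivially degenerate.
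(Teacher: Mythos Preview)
Your argument is correct, but it takes a longer route than the paper's.  The paper argues by contradiction, assuming $\img g$ is Zariski-dense in $Y$, and then applies Noguchi's Second-Main-Theorem type inequality \cite[Thm.~3.2]{MR780664} \emph{directly} to $g : V → Y$: under the stated hypotheses on $(Y,D)$, that theorem furnishes positive constants with
\[
  c⁺_1·T(•, g, L) ≤ N_{\Ramification ρ}(•) + c⁺_2·ε·\log • + c⁺_3·\log⁺ T(•, g, L) + c_4 \ae,
\]
and combining this with the hypothesis \eqref{eq:4-3-4} and the growth bound of Lemma~\ref{lem:3-7} yields a contradiction in a few lines of elementary estimates.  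You instead compose with the Albanese, transfer the ramification bound from $T(•,g,L)$ to $T(•,h,M)$ via a resolution and Kodaira's lemma, and then invoke Noguchi's result on the semi-abelian side.  Both routes ultimately rest on the same input from \cite{MR780664}, but the paper exploits that \cite[Thm.~3.2]{MR780664} is already formulated for maps into an arbitrary projective $Y$ satisfying the Albanese conditions, so no passage to $A$ and no height transfer are needed on the user's side.  Your approach has the merit of making the geometric mechanism explicit, at the cost of the extra bookkeeping you yourself flag (the resolution $Y' → Y$, the Kodaira decomposition, and---implicitly---a second height comparison when the black box is applied to a smooth model of $W$ rather than to $W$ itself).
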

\begin{proof}
  \CounterStep We argue by contradiction and assume that the image of $g$
  \emph{is} Zariski dense in $Y$.  By \cite[Thm.~3.2 on p.~306]{MR780664}, there
  will then exist constants $c⁺_1, c⁺_2, c⁺_3 ∈ ℝ⁺$ and $c_4 ∈ ℝ$ such that an
  inequality of the form
  \begin{equation}\label{eq:4-4-1}
    c⁺_1·T(•, g, L) ≤ N_{\Ramification ρ}(•) + c⁺_2·ε·\log • + c⁺_3·\log⁺ T(•, g, L) + c_4 \ae
  \end{equation}
  holds for every number $ε ∈ (0,1)$.  Choose $ε$ small enough so that
  \[
    (1 + c⁺_2 + c⁺_3)·ε < c⁺_1
  \]
  and use the assumption that $ℒ$ is ample to observe
  \begin{align*}
    c⁺_1·T(•, g, L) & ≤ N_{\Ramification ρ}(•) + c⁺_2·ε·\log • + c⁺_3·\log⁺ T(•, g, L) + c_4 \ae && \text{\eqref{eq:4-4-1}} \\
    & ≤ ε·T(•, g, L) + c⁺_2·ε·\log • + c⁺_3·\log⁺ T(•, g, L) + c_4 \ae && \text{\eqref{eq:4-3-4}} \\
    & ≤ ε·T(•, g, L) + c⁺_2·ε·T(•, g, L) + c⁺_3·\log⁺ T(•, g, L) + c_4 \ae && \text{Lem.~\ref{lem:3-7}} \\
    & ≤ ε·T(•, g, L) + c⁺_2·ε·T(•, g, L) + c⁺_3·ε·T(•, g, L) + c_4 \ae && \text{Lem.~\ref{lem:3-7}} \\
    & = (1 + c⁺_2 + c⁺_3)·ε·T(•, g, L).
  \end{align*}
  Given that $T(•, g, L)$ is monotonous and unbounded, this is absurd.
\end{proof}

\subsection{Proof of Theorem~\ref*{thm:4-1}}
\approvals{Erwan & yes \\ Stefan & yes}
\label{sec:4-2}

Since none of our assumptions refers to $L$, we may assume without loss of
generality that $ℒ$ is ample.  Following \cite[proof of Prop.~3.1]{MR2667786},
we aim to apply Theorem~\ref{thm:3-11} (``Lemma on logarithmic derivatives'').
To this end, consider the standard Hermitian bundle $H$ of
Notation~\ref{not:3-10}.
  
Using Assumption~\eqref{il:4-1-4}, the counting function for the ramification of
$ρ$ is estimated as follows,
\begin{align*}
  N_{\Ramification ρ}(•) & ≤ (\deg ρ)·N_{1,\Ramification ρ}(•) & & \forall v ∈ V: \ord_v \operatorname{Ram.} ρ ≤ \deg ρ \\
  & ≤ (\deg ρ)·\sum_{i=1}^l N_{ξ_i^*\{0\}}(•) & & \text{Ass.~\eqref{il:4-1-4}}
  \intertext{Given any $ε' > 0$, we can give an estimate for each summand,}
  N_{ξ_i^*\{0\}}(•) & = T(•, ξ_i, H) - m(•, ξ_i, H, σ_0) +O(1) & & \text{Thm.~\ref{thm:3-8} (``first main'')} \\
                    & ≤ T(•, ξ_i, H) + O(1) & & \text{Lem.~\ref{lem:3-5}} \\
                    & = N_{ξ_i^*\{∞\}}(•) + m(•, ξ_i, H, σ_{∞}) + O(1) & & \text{Thm.~\ref{thm:3-8} (``first main'')} \\
                    & = m(•, ξ_i, H, σ_{∞}) + O(1) & & \text{since $ξ_i$ is holomorphic} \\
                    & ≤ ε'·T(•, g, L) + O(1) \ae & & \text{Thm.~\ref{thm:3-11} (``log.~derivatives'')}
\end{align*}
Lemma~\ref{lem:3-7} will then imply that Inequality~\eqref{eq:4-3-4} of
Noguchi's criterion holds for all $ε >0$.  The claim thus follows.  \qed

%
%
\svnid{$Id: 05-BO.tex 943 2024-10-14 10:58:51Z kebekus $}
\selectlanguage{british}

\section{\texorpdfstring{$\cC$}{𝒞}-version of the Bloch-Ochiai theorem, proof of Theorem~\ref*{thm:1-7}}
\subversionInfo
\approvals{Erwan & yes \\ Stefan & yes}

Theorem~\ref{thm:1-7} is a direct consequence of the following, stronger
Proposition~\ref{prop:5-2}.  The formulation of Proposition~\ref{prop:5-2} use
the ``Albanese of a cover'', as introduced and discussed in
\cite[Sect.~5]{orbiAlb2}.  For the reader's convenience, we recall the relevant
notions in brief.

\begin{reminder}[\protect{The Albanese of a cover, \cite[Def.~5.2 and Prop.~5.4]{orbiAlb2}}]%
  Let $(X, D)$ be a $\cC$-pair where $X$ is compact Kähler and let $γ : \what{X}
  \twoheadrightarrow X$ be a cover.  Consider the open sets
  \[
    X° := X ∖ \supp ⌊ D ⌋
    \quad\text{and}\quad
    \what{X}° := γ^{-1}(X°).
  \]
  Then, there exists a semitoric variety $\Alb(X,D,γ)° ⊂ \Alb(X,D,γ)$ and a
  quasi-algebraic\footnote{Quasi-algebraic = the holomorphic morphism
  $\alb(X,D,γ)°$ extends to a meromorphic map between the compact spaces
  $\what{X}$ and $\Alb(X,D,γ)$} morphism
  \[
    \alb(X,D,γ)° : \what{X}° → \Alb(X,D,γ)°
  \]
  such that the pull-back morphism for logarithmic reflexive differentials takes
  its image in the subspace of adapted reflexive differentials.  Moreover, if
  $A° ⊂ A$ is any semitoric variety, if $a° : \what{X}° → A°$ is any
  quasi-algebraic morphism whose associated pull-back morphism for logarithmic
  reflexive differentials takes its image in the subspace of adapted reflexive
  differentials, then $a$ factors uniquely as
  \[
    \begin{tikzcd}[column sep=2cm]
      \what{X}° \arrow[r, "\alb(X{,}D{,}γ)°"'] \arrow[rr, "a°", bend left=15] & \Alb(X,D,γ)° \arrow[r, "∃!b°"'] & A°,
    \end{tikzcd}
  \]
  where $b°$ is quasi-algebraic.
\end{reminder}

\begin{prop}[Strong version of Theorem~\ref{thm:1-7}]\label{prop:5-2}%
  Let $(X, D)$ be a $\cC$-pair where $X$ is compact Kähler.  If there exists a
  cover $γ : \what{X} \twoheadrightarrow X$ such that $\alb(X,D,γ)°$ is
  \emph{not} dominant, then every $\cC$-entire curve $(ℂ,0) → (X,D)$ is
  algebraically degenerate.
\end{prop}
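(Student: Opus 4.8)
The plan is to run a $\cC$-analogue of Kawamata's proof of Bloch--Ochiai: lift the given $\cC$-entire curve to the cover $\what{X}$, push it forward to the Albanese of the cover, cut down to a subvariety of log-general type by an Ueno-type fibration, and conclude with the degeneracy criterion Theorem~\ref{thm:4-1}. \emph{Reductions.} Let $f : (ℂ,0) → (X,D)$ be a $\cC$-entire curve. If $\img f ⊆ \supp D$ it is not Zariski dense and we are done, so assume $\img f ⊄ \supp D$; since a $\cC$-morphism cannot meet the components of $D$ of multiplicity $∞$, this forces $\img f ⊆ X° := X ∖ \supp ⌊D⌋$. I would also replace $γ$ by a cover dominating both $γ$ and a cover adapted to $D$: enlarging a cover can only enlarge its Albanese (universal property of $\alb(X,D,γ)°$), so the hypothesis that $\alb(X,D,γ)°$ is not dominant is inherited, and we may assume $γ$ is adapted to $D$ from the start.

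\emph{The lift.} Form the fibre product $ℂ ×_X \what{X}$; its fibres have at most $\deg γ$ points, so the normalisation $V$ of a connected component dominating $ℂ$ gives a \emph{finite} cover $ρ : V ↠ ℂ$ (so that Setting~\ref{set:3-1} applies) together with a morphism $\what{f} : V → \what{X}°$ satisfying $γ ∘ \what{f} = f ∘ ρ$. Since $γ$ is finite and surjective, $\what{f}$ has Zariski-dense image if and only if $f$ does, so it suffices to show $\what{f}$ is algebraically degenerate. Two features of the lift will be used below: first, $\supp(\Ramification ρ)$ is contained in the $\what{f}$-preimage of $\Ramification γ$, i.e. it lies over the points where $f$ meets the orbifold part of $D$ with multiplicity \emph{not} divisible by the relevant $m_i$; second, at each such point $\what{f}$ is critical, because $f ∘ ρ$ is locally a function of $t^{e}$ with $e ≥ 2$, and its leading ($e$-jet) direction is tangent to the corresponding component of $\Ramification γ$.

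\emph{Albanese and Ueno reduction.} Put $A := \Alb(X,D,γ)$, $A° := \Alb(X,D,γ)°$ and $W := \overline{\img \alb(X,D,γ)°} ⊊ A$, a \emph{proper} subvariety by hypothesis. From the universal property of the Albanese of a cover (together with surjectivity of restriction of invariant log-differentials to a sub-semitorus) one checks that $W$ is contained in no proper translated sub-semitorus of $A$. Let $I ⊆ A°$ be the largest sub-semitorus with $I + W° = W°$, set $B := A/I$, $π : A → B$, and let $W_B ⊆ B$ be the closure of $π(W°)$; by the structure theory of subvarieties of semitori (Ueno, Kawamata) $W_B$ is positive-dimensional with trivial stabiliser, hence every log-resolution $(Y,Δ_Y)$ of $(\overline{W_B}, Δ_B ∩ \overline{W_B})$ is of log-general type, with generically finite log-Albanese of image $W_B$; being Kähler and of log-general type, $Y$ is projective. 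Let $\widetilde{g} : V → Y$ be induced by $π ∘ \alb(X,D,γ)° ∘ \what{f}$; as this map factors through $A°$, $\widetilde{g}$ lands in $Y ∖ Δ_Y$, so conditions \ref{il:4-1-1}--\ref{il:4-1-3} of Theorem~\ref{thm:4-1} hold.

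\emph{Verifying \eqref{il:4-1-4}, and the main obstacle.} For condition~\eqref{il:4-1-4} I would take the $ω_i$ to be invariant logarithmic differentials on $B$ vanishing along the images under $π ∘ \alb(X,D,γ)°$ of the components of $\Ramification γ$: applying Lemma~\ref{lem:2-4} to the cyclic inertia groups of $γ$ along the components of $\supp D$, these images lie in translates of proper sub-semitori of $A°$, so such nonzero $ω_i$ exist, and finitely many suffice since $\supp D$ has finitely many components. Because $\what{f}$ is critical at every point of $\supp(\Ramification ρ)$ with $e$-jet direction tangent there to $\Ramification γ$, the functions $ξ_i := η(\widetilde{g}^*ω_i)$ are holomorphic and vanish on $\supp(\Ramification ρ)$, and Theorem~\ref{thm:4-1} yields that $\widetilde{g}$ is algebraically degenerate. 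But if $f$ — hence $\what{f}$, hence $\widetilde{g}$ — had Zariski-dense image, $\widetilde{g}$ would be dense in $Y$, a contradiction; so $f$ is algebraically degenerate. The genuinely delicate part of this argument is exactly this last step: one must pin down the correct cyclic actions so that Lemma~\ref{lem:2-4} produces \emph{proper} sub-semitori of $B$ (ruling out the case where some image surjects onto $B$), and one must convert the criticality of $\what{f}$ along $\Ramification ρ$ into the precise order-of-vanishing statement making the $ξ_i$ simultaneously holomorphic and vanishing there. The quasi-algebraicity bookkeeping around $\Ramification γ$, and the passage through the possibly singular spaces $\what{X}$ and $\overline{W_B}$, will also require care.
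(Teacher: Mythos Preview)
Your global architecture matches the paper's: pass to a cover, form the fibred lift $V→ℂ$, map to the Albanese of the cover, perform an Ueno--Kawamata reduction to a subvariety of log-general type in a semitorus, and close with Theorem~\ref{thm:4-1}. The genuine gap is exactly where you flag it, in verifying~\eqref{il:4-1-4}, and your proposed mechanism does not work.

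You want to choose the $ω_i$ as invariant differentials on $B$ that vanish along the images of the components of $\Ramification γ$, and to deduce $ξ_i(v)=0$ at $v∈\supp(\Ramification ρ)$ from a ``criticality/tangency'' property of $\what{f}$. But the chain-rule argument you sketch only gives $d\what{f}(v)∈\ker dγ$, and along a ramification divisor $E⊂\what{X}$ the kernel of $dγ$ is the \emph{transverse} direction to $E$, not the tangent one; so the ``$e$-jet tangent to $\Ramification γ$'' claim is not correct as stated. Even if one repairs the direction, a tangency statement controls at best one order of vanishing of $\what{f}^*ω_i$, whereas $ξ_i=\what{f}^*ω_i/ρ^*dt$ needs order $e$ to vanish and order $e{-}1$ merely to be holomorphic. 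The acknowledged sub-issue (that the image of a component of $\Ramification γ$ in $B$ might fail to sit in a \emph{proper} sub-semitorus) is a symptom of the same problem: you are trying to force vanishing via a geometric locus, and the locus is not under control.

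The paper's mechanism is different. One first replaces $γ$ by its Galois closure (not by an adapted cover --- a global adapted cover need not exist), so that a finite group $G$ acts equivariantly on $\what{X}$, on $V$, and on $B°$. At a ramification point $v∈V$ the isotropy $H⊂G$ is cyclic and, by equivariance, fixes $\what{f}(v)∈B°$. One then chooses $τ_H$ not to vanish on a locus but to lie in a \emph{nontrivial eigenspace} for the $H$-action on $H^0\bigl(B,Ω^1_B(\logΔ_B)\bigr)$; Lemma~\ref{lem:2-4} is invoked only to ensure some such eigendifferential restricts nontrivially to $Z$. Holomorphicity of $ξ_H$ is then a one-line consequence of the $\cC$-morphism property (the pull-back lands in $ρ^*Ω^1_ℂ$, not merely in $Ω^1_V$). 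Vanishing at $v$ is pure representation theory: $ρ^*dt$ is $H$-invariant while $σ_H$ transforms by $ζ≠1$, so $ξ_H∘t_h=ζ·ξ_H$ forces $ξ_H(v)=0$ at the $H$-fixed point. This factor-of-automorphy argument bypasses all order-of-vanishing computations and makes the sub-semitorus issue disappear.
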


\begin{rem}[Quasi-algebraicity and dominance]
  The morphism $\alb(X,D,γ)°$ is quasi-algebraic and its image is therefore
  constructible.  The word ``dominant'' in Proposition~\ref{prop:5-2} is
  therefore meaningful.
\end{rem}

\begin{rem}[Relation to Theorem~\ref{thm:1-7}]
  In the setting of Theorem~\ref{thm:1-7}, the assumption $q⁺_{\Alb}(X, D) >
  \dim X$ guarantees the existence of a cover $γ : \what{X} \twoheadrightarrow
  X$ such that $\dim \Alb(X,D,γ) > \dim X$.  In particular, $\alb(X,D,γ)°$ is
  \emph{not} dominant in this setting.
\end{rem}

We will prove Proposition~\ref{prop:5-2} in the remainder of the present
section.

\subsection{Proof of Proposition~\ref*{prop:5-2}}
\approvals{Erwan & yes \\ Stefan & yes}
\CounterStep

We argue by contradiction and assume that there exists one $\cC$-entire curve
$φ: (ℂ,0) → (X,D)$ whose image is Zariski dense in $X$.  As before, consider the
open sets
\[
  X° := X ∖ \supp ⌊ D ⌋
  \quad\text{and}\quad
  \what{X}° := γ^{-1}(X°).
\]
The definition of $\cC$-morphism guarantees that $φ$ takes its image in $X° ⊆
X$.

\subsection*{Step 1: Galois closure and the Albanese}
\approvals{Erwan & yes \\ Stefan & yes}

Functoriality of the Albanese, as spelled out in \cite[Lem.~5.5]{orbiAlb2},
allows replacing the cover $γ$ by its Galois closure.  We will therefore assume
without loss of generality that $γ$ is Galois, with group $G$.

The proof of Proposition~\ref{prop:5-2} discusses the Albanese of the cover $γ$.
For brevity of notation, we denote the associated semitoric variety by $\Alb° ⊂
\Alb$ and write $\alb° : \what{X}° → \Alb°$ for the associated morphism.  Recall
from \cite[Obs.~5.6]{orbiAlb2} that $G$ acts on $\Alb°$ by quasi-algebraic
automorphisms, in a way that makes the morphism $\alb°$ equivariant.  Finally,
choose an element $\what{x} ∈ \what{X}°$ and use its image point
\[
  0_{\Alb°} := \alb°(\what{x}) ∈ \Alb°
\]
to equip $\Alb°$ with the structure of a Lie group.

\subsection*{Step 2: Reminder}
\approvals{Erwan & yes \\ Stefan & yes}

The assumption that $\alb°$ is \emph{not} dominant allows using constructions
and results of our earlier paper \cite{orbiAlb2}.  For the reader's convenience,
we recall the main points.

\subsubsection*{Construction of a semitoric quotient variety}

In \cite[Const.~8.10]{orbiAlb2}, we construct a non-trivial semitoric variety
$B° ⊆ B$ with $G$-action, a point $0_{B°} ∈ B°$ that equips $B°$ with the
structure of a Lie group, and a diagram
\begin{equation}\label{eq:5-1}
  \begin{tikzcd}[column sep=2cm, row sep=1cm]
    \what{X}° \ar[r, "\alb°"'] \ar[d, two heads, "γ°\text{, quotient by }G"'] \ar[rr, bend left=15, "b°"] & \Alb° \ar[r, two heads, "β°\text{, group quotient}"'] \ar[d, two heads, "γ_{\Alb°}\text{, quotient by }G"] & B° \ar[d, two heads, "γ_{B°}\text{, quotient by }G"] \\
    X° \ar[r, "δ°"'] & \factor{\Alb°}{G} \ar[r, two heads, "ε°"'] & \factor{B°}{G}
  \end{tikzcd}
\end{equation}
where (among other things) the following holds.
\begin{enumerate}
  \item All horizontal arrows are quasi-algebraic,

  \item all arrows in the top row are $G$-equivariant, and

  \item all arrows in the bottom row are $\cC$-morphisms for the $\cC$-pairs
    \[
      (X°, D°),\quad \factor{\bigl( \Alb°, 0\bigr)}{G}, \quad\text{and}\quad \factor{\bigl(B°, 0\bigr)}{G}.
    \]
\end{enumerate}

\subsubsection*{The image of $β°$}

Consider the topological closure $Z := \overline{\img β°}$, which is an analytic
subset of $B$ because $β°$ is quasi-algebraic.  We write $Z° := Z ∩ B°$ and set
$p := \dim Z$.  The following has been shown in \cite[Obs.~8.12]{orbiAlb2}.
\begin{enumerate}
  \item The variety $Z°$ is positive-dimensional.

  \item\label{il:5-5-6} The variety $Z°$ is a proper subset $Z° ⊊ B°$.
\end{enumerate}

\subsubsection*{Differentials on $B°$}

Finally, \cite[Obs.~8.11]{orbiAlb2} employs methods from Kawamata's proof of the
Bloch conjecture, in order to find $B°$-invariant differentials $τ°_0$, …, $τ°_p
∈ H⁰ \bigl( B°,\, Ω^p_{B°} \bigr)$ with the following properties.
\begin{enumerate}
  \item The restrictions $τ°_•|_{Z°_{\reg}}$ are linearly independent
  top-differentials on $Z°_{\reg}$, and therefore define a $(p+1)$-dimensional
  linear system $L ⊆ H⁰ \bigl( Z°_{\reg}, ω_{Z°_{\reg}} \bigr)$.
  
  \item The associated meromorphic map $φ_L : Z°_{\reg} \dasharrow ℙ^p$ is
  generically finite.
\end{enumerate}

\subsection*{Step 3: Setup}
\approvals{Erwan & yes \\ Stefan & yes}

Let $V$ be the normalized fibre product $ℂ ⨯_{X°} \what{X}°$, which may be
reducible or irreducible.  The construction of $V$ extends
Diagram~\eqref{eq:5-1} as follows,
\[
  \begin{tikzcd}[column sep=2cm, row sep=1cm]
    V \ar[d, two heads, "γ_V\text{, quotient}"'] \ar[r, "\what{φ}\text{, dense img.}"'] \ar[rrr, bend left=10, "\what{f} := β°◦\alb°◦\what{φ}"] & \what{X}° \ar[r, "\alb°"'] \ar[d, two heads, "γ_{\what{X}°}\text{, quotient}"] & \Alb° \ar[r, two heads, "β°"'] \ar[d, two heads, "γ_{\Alb°}\text{, quotient}"] & B° \ar[d, two heads, "γ_{B°}\text{, quotient}"] \\
    ℂ \ar[r, "φ\text{, dense img.}"] \ar[rrr, bend right=10, "f := ε°◦δ°◦φ"'] & X° \ar[r, "δ°"] & \factor{\Alb°}{G} \ar[r, two heads, "ε°"] & \factor{B°}{G}.
  \end{tikzcd}
\]
We highlight two elementary facts that will later become relevant.

\begin{claim}\label{claim:5-6}%
  The morphism $f$ is a $\cC$-morphism from $(ℂ, 0)$ to the quotient pair
  $\factor{\bigl(B°, 0\bigr)}{G}$.
\end{claim}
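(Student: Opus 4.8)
The plan is to recognise $f$ as a composition of $\cC$-morphisms and then invoke the fact, established in \cite[Sect.~7ff]{orbiAlb1}, that a composition of $\cC$-morphisms is again a $\cC$-morphism. Since $f = ε° ◦ δ° ◦ φ$ by construction, it suffices to check that each of the three factors is a $\cC$-morphism with the appropriate source and target $\cC$-pairs, and then compose.

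First I would observe that $φ$ is a $\cC$-morphism from $(ℂ, 0)$ to the restricted pair $(X°, D°)$, and not merely to $(X, D)$. Indeed, $φ$ is an entire $\cC$-curve by assumption, hence a $\cC$-morphism $(ℂ, 0) → (X, D)$, and — as recalled at the beginning of the proof — this already forces $\img φ ⊆ X°$. The boundary of $(X°, D°)$ is the restriction of the boundary of $(X, D)$ to $X°$, and the tangency conditions defining a $\cC$-morphism are local along the preimages of the components $D_i$ with $m_i < ∞$; the components with $m_i = ∞$ play no role, since $\img φ$ avoids them. Hence $φ$ is automatically a $\cC$-morphism into $(X°, D°)$.

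Next, item~(3) in the list of properties of Diagram~\eqref{eq:5-1} recalled in Step~2 asserts precisely that $δ°$ and $ε°$ are $\cC$-morphisms for the $\cC$-pairs $(X°, D°)$, $\factor{\bigl(\Alb°, 0\bigr)}{G}$ and $\factor{\bigl(B°, 0\bigr)}{G}$. Composing, $δ° ◦ φ$ is a $\cC$-morphism $(ℂ, 0) → \factor{\bigl(\Alb°, 0\bigr)}{G}$, and then $f = ε° ◦ \bigl(δ° ◦ φ\bigr)$ is a $\cC$-morphism $(ℂ, 0) → \factor{\bigl(B°, 0\bigr)}{G}$, which is the assertion of the Claim.

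I do not anticipate a genuine obstacle here; the statement is elementary, and this is exactly why the construction of Diagram~\eqref{eq:5-1} in \cite{orbiAlb2} was arranged so that its bottom row consists of $\cC$-morphisms. The only points deserving (minor) attention are the passage from $(X, D)$ to the restricted pair $(X°, D°)$ — harmless because $\img φ ⊆ X°$ — and keeping precise track of the source and target $\cC$-structures so that the composability statement of \cite{orbiAlb1} applies verbatim.
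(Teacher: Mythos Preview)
Your argument and the paper's take genuinely different routes, and the difference matters.

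The paper's proof is a single line: it invokes \cite[Prop.~11.1]{orbiAlb1} together with the observation that the target $\factor{(B°,0)}{G}$ is \emph{uniformizable}. Concretely, the commutative square $γ_{B°}\circ\what{f} = f\circ γ_V$ exhibits a lift of $f$ to the cover $B°$ over the cover $V\twoheadrightarrow ℂ$, and Prop.~11.1 says that for a uniformizable target this lifting property is exactly what makes $f$ a $\cC$-morphism. No composition of $\cC$-morphisms is used.

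Your route instead relies on the assertion that $\cC$-morphisms compose, citing ``\cite[Sect.~7ff]{orbiAlb1}''. This is the delicate point: composability of $\cC$-morphisms is \emph{not} automatic in Campana's theory --- the tangency conditions are imposed only along the declared boundary components of the intermediate pair, and $(δ°)^{-1}$ of the branch divisor of $\factor{(\Alb°,0)}{G}$ need not be supported on $D°$. The paper \cite{orbiAlb1} discusses this at length, and composability typically requires extra hypotheses (e.g.\ that intermediate targets be uniformizable, or that one of the factors be a cover). Your citation to ``Sect.~7ff'' does not isolate such a result, and the fact that the authors themselves reach for Prop.~11.1 rather than a composition lemma is a strong signal that naked composability is not on the shelf here.

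So either locate the precise statement in \cite{orbiAlb1} that guarantees $ε°\circδ°\circφ$ is a $\cC$-morphism under the present hypotheses, or switch to the paper's argument: the quotient pair is uniformizable by construction, $f$ visibly lifts via $\what{f}$, and Prop.~11.1 finishes.
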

\begin{proof}[Proof of Claim~\ref{claim:5-6}]
  This follows from \cite[Prop.~11.1]{orbiAlb1}, given that the quotient pair
  $\bigl(B°, 0\bigr)/G$ is uniformizable.
  \qedhere~\mbox{(Claim~\ref{claim:5-6})}
\end{proof}

\begin{claim}\label{claim:5-7} %
  The natural $G$-action on $V$ is effective.  More precisely: if $g ∈ G$ is any
  element, then the fixed point set of the associated translation $V → V$ is
  finite.
\end{claim}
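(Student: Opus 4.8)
The plan is to work fiber-by-fiber over $ℂ$ and reduce effectiveness of the $G$-action on $V$ to a statement about the action of $G$ on the fibers of the Galois cover $γ : \what{X} \twoheadrightarrow X$. Recall that $V$ is the normalization of the fiber product $ℂ ⨯_{X°} \what{X}°$, so a point of $V$ lies over a pair $(t, \what{x})$ with $φ(t) = γ(\what{x})$, and the $G$-action on $V$ is the one induced from the $G$-action on the factor $\what{X}°$. First I would fix a non-trivial element $g ∈ G$ and a fixed point $v ∈ V$ of the translation $g : V → V$, with image $t = \what{φ}_{ℂ}(v) ∈ ℂ$ and $\what{x} ∈ \what{X}°$; then $g$ fixes $\what{x}$, i.e. $\what{x}$ lies in the fixed locus $\Fix(g) ⊆ \what{X}°$ of a non-trivial element of the Galois group.

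The key geometric input is that for a Galois cover $γ$ with group $G$, the fixed locus $\Fix(g)$ of any non-trivial $g ∈ G$ is a \emph{proper} analytic subset of $\what{X}°$ — this is exactly the statement that $γ$ is generically étale with Galois group $G$ (equivalently, no non-trivial element of $G$ acts as the identity on the function field). Since $φ$ has Zariski-dense image in $X$, its lift $\what{φ}$ to $V$ has image not contained in any proper analytic subset of $\what{X}°$ that is $γ$-saturated; in particular $\what{φ}^{-1}\bigl(\Fix(g)\bigr)$ is a proper analytic subset of $V$, hence (as $V$ is a Riemann surface, being the normalization of a curve) a discrete, and by the properness built into ``cover'' combined with the structure of $ℂ$, actually locally finite subset. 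To upgrade ``locally finite'' to ``finite'', I would note that the fixed set of the translation $g : V → V$ maps to $ℂ$ with finite fibers (the fibers of $V → ℂ$ are finite, being the fibers of the cover $γ$) and is contained in $\what{φ}^{-1}(\Fix(g))$; one then argues that this set cannot accumulate at $∞$ either, because over a punctured neighborhood of $∞$ the map $V → ℂ$ is a finite cover of the punctured disk and $\Fix(g) ∩ \what{φ}(\text{that part})$ is again proper, so the fixed set is finite.

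I expect the main obstacle to be the passage from ``the fixed locus of $g$ in $V$ is analytic and proper'' to ``it is finite'', i.e. ruling out infinitely many fixed points accumulating only at the ends of $ℂ$. This requires care because $V$ need not be connected and need not be all of $ℂ$ or a disk; I would handle it by using that $γ_V : V \twoheadrightarrow ℂ$ (or rather the composite $V → ℂ$) is a finite cover — more precisely, since $γ$ is finite of degree $\deg γ$, the induced map $V → ℂ$ has fibers of cardinality at most $\deg γ$ — together with the fact that a proper analytic subset of $V$ that meets each fiber of a finite map to $ℂ$ in a set bounded by $\deg γ$ points and is $γ$-preimage-closed must be empty or, when non-empty, maps to a proper analytic (hence finite or all of $ℂ$, but here necessarily finite since proper) subset of $ℂ$. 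The first sentence of the claim (effectiveness) is then immediate: if some $g ≠ e$ acted trivially on $V$, its fixed set would be all of $V$, not finite, contradicting what was just shown; alternatively, effectiveness follows directly since $\what{φ}$ is dominant and $G$ already acts effectively (via $γ$ Galois) on $\what{X}°$.
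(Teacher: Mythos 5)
Your core argument is the same as the paper's: a fixed point of a non-trivial $g ∈ G$ on $V$ maps under $\what{φ}$ into $\Fix(g) ⊆ \what{X}°$, and since every component of $V$ has Zariski-dense image in $\what{X}°$ while $\Fix(g)$ is a proper analytic subset (because $γ$ is Galois with group $G$), no component of $V$ can be pointwise fixed. The paper states exactly this, contrapositively: if $g$ fixed a whole component $V'$, the dense set $\what{φ}(V')$ would be $g$-fixed, forcing $g$ to act trivially on $\what{X}°$. That already shows the fixed locus is an analytic subset of the curve $V$ containing no component, hence discrete — which is all that is actually used later (Claim~5.16 only needs effectiveness of the isotropy action near a single fixed point in order to linearize).

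Where your write-up has a genuine gap is the promotion of ``discrete'' to ``finite'', which you yourself flag as the delicate step. The argument you offer — that the fixed set maps to a ``proper analytic, hence finite'' subset of $ℂ$, and that properness over a punctured neighbourhood of $∞$ rules out accumulation there — does not work: a proper analytic subset of $ℂ$ (or of a punctured disc around $∞$) is merely a closed discrete set and can be infinite. Indeed $\what{φ}^{-1}\bigl(\Fix(g)\bigr)$ can perfectly well be an infinite discrete subset of $V$, just as $φ^{-1}(\Branch γ°) ⊂ ℂ$ can be infinite for an entire curve $φ$ with dense image; nothing in your argument excludes this. So that step would fail as written. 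The good news is that nothing downstream requires literal finiteness: the correct (and sufficient) conclusion, which your density argument and the paper's one-line proof both deliver, is that the fixed locus of every non-trivial $g$ is a proper analytic, hence discrete, subset of each component of $V$.
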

\begin{proof}[Proof of Claim~\ref{claim:5-7}]
  If $g ∈ G$ is any hypothetical element whose translation morphism fixes an
  entire component $V' ⊂ V$, then equivariance of $\what{φ}$ implies that the
  image set $\what{φ}(V')$ is $g$-fixed.  But that image set is dense in
  $\what{X}°$.  \qedhere~(Claim~\ref{claim:5-7})
\end{proof}

\subsection*{Step 4: Resolution of singularities}
\approvals{Erwan & yes \\ Stefan & yes}

Consider the semitoric variety $B° ⊂ B$ and choose a log-resolution $π : Y → Z$
of $(Z, Δ_B ∩ Z)$.  Write $Δ_Y$ for the reduced snc divisor on $Y$ whose support
equals $π^{-1}(Δ_B)$.  As before, write $Y° := Y ∖ Δ_Y$.  The following diagram
summarizes the setup,
\[
  \begin{tikzcd}[column sep=2cm, row sep=1cm]
    & Y \ar[d, "π\text{, resolution}"]\\
    V \ar[r, "\what{f}"'] \ar[d, two heads, "γ_V\text{, quotient}"'] \ar[ur, bend left=5, "φ"]& Z \ar[d, two heads, "\text{quotient}"] \ar[r, hook, "ι\text{, inclusion}"'] & B \ar[d, two heads, "\text{quotient}"] \\
    ℂ \ar[r, "f"'] & \factor{Z}{G} \ar[r, hook] & \factor{B}{G}.
  \end{tikzcd}
\]

\begin{rem}\label{rem:5-8}%
  If $V' ⊂ V$ is any component, then it is clear by construction that
  $\what{f}(V')$ is Zariski-dense in $Z$.  The morphism $φ$ is the canonical
  lifting of $\what{f}$ to the resolution of singularities.  This lifting exists
  because the images $\what{f}(V')$ are Zariski-dense and hence not contained in
  the indeterminacy locus of $π^{-1}$.
\end{rem}

\begin{claim}\label{claim:5-9} %
  The log pair $(Y,Δ_Y)$ is of log-general type.
\end{claim}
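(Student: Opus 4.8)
The plan is to produce enough logarithmic (pluri)canonical sections on $(Y,\Delta_Y)$ to force $K_Y + \Delta_Y$ to be big. The raw material is already at hand: the $B°$-invariant top-forms $\tau°_0,\dots,\tau°_p \in H°\bigl(B°,\,\Omega^p_{B°}\bigr)$ produced in Step~2 (following Kawamata's argument, \cite[Obs.~8.11]{orbiAlb2}). Recalling from Setting~\ref{setnot:2-1} that $\Omega^1_B(\log \Delta_B)$ is free, the sheaf $\Omega^p_B(\log \Delta_B) = \bigwedge^p \Omega^1_B(\log \Delta_B)$ is free as well, so each $\tau°_i$ extends to a global logarithmic $p$-form $\tau_i \in H°\bigl(B,\,\Omega^p_B(\log\Delta_B)\bigr)$. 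Restricting $\tau_i$ to $Z$ and pulling back along the log-resolution $\pi : Y → Z$ of $(Z, \Delta_B ∩ Z)$ produces rational $p$-forms $\sigma_i$ on $Y$. Since $\dim Y = \dim Z = p$, a $p$-form on the smooth variety $Y$ with at worst logarithmic poles along $\Delta_Y$ is precisely a section of $\omega_Y(\Delta_Y) = \mathcal{O}_Y(K_Y + \Delta_Y)$.

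First I would check that $\sigma_0,\dots,\sigma_p$ are genuine sections of $\mathcal{O}_Y(K_Y + \Delta_Y)$, i.e. that they acquire no poles outside $\Delta_Y$. On the dense open locus where $\pi$ restricts to an isomorphism onto the log-smooth part of $(Z,\Delta_B ∩ Z)$ this is a local computation: a logarithmic $p$-form on $(B,\Delta_B)$ restricts, on $Z_{\reg}$, to a top-form with at worst log poles along $\Delta_B ∩ Z$. Over the $\pi$-exceptional locus and the remaining components of $\Delta_Y$ one uses that $\pi$ is a \emph{log}-resolution: in local log-coordinates on $Y$ the pullback of $dz/z$, for $z$ a local equation of a component of $\Delta_B$, is an integral combination of terms $dw_j/w_j$ plus a holomorphic form, while the pullback of a holomorphic differential stays holomorphic; hence no poles beyond $\Delta_Y$ appear. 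As $Y$ is smooth, it then suffices to control pole orders along divisors, and the above shows $\sigma_i \in H°\bigl(Y,\,\mathcal{O}_Y(K_Y + \Delta_Y)\bigr)$. (Equivalently, this is the standard stability of logarithmic differentials under pullback by log-morphisms, used repeatedly in \cite{orbiAlb2}.)

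Next I would transport the two properties recorded in Step~2. Since $\pi$ restricts to a birational map $Y \dasharrow Z$, hence to a birational identification of $Y$ with $Z°_{\reg}$ under which $\sigma_i$ corresponds to $\tau°_i|_{Z°_{\reg}}$, linear independence of the $\tau°_i|_{Z°_{\reg}}$ gives linear independence of $\sigma_0,\dots,\sigma_p$, and the rational map $\phi : Y \dasharrow \mathbb{P}^p$ defined by $(\sigma_0:\dots:\sigma_p)$ agrees birationally with the generically finite map $\phi_L : Z°_{\reg} \dasharrow \mathbb{P}^p$. Thus $\phi$ is generically finite onto its image, and that image is an irreducible $p$-dimensional subvariety of $\mathbb{P}^p$, hence all of $\mathbb{P}^p$.

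Finally, writing $W := \langle \sigma_0,\dots,\sigma_p\rangle \subseteq H°\bigl(Y,\,\mathcal{O}_Y(K_Y + \Delta_Y)\bigr)$, the associated map $\phi = \phi_{|W|}$ has $\dim \phi(Y) = p = \dim Y$, so
\[
  \kappa\bigl(Y,\, K_Y + \Delta_Y\bigr) \;\geq\; \dim \phi(Y) \;=\; \dim Y,
\]
which forces $\kappa\bigl(Y, K_Y + \Delta_Y\bigr) = \dim Y$. Hence $K_Y + \Delta_Y$ is big and $(Y,\Delta_Y)$ is of log-general type. The one genuinely delicate point is the pole-control of the second paragraph — this is exactly where the choice of $\pi$ as a log-resolution of $(Z, \Delta_B ∩ Z)$, rather than an arbitrary resolution, is essential; everything else is either bookkeeping or a direct translation of \cite[Obs.~8.11 and 8.12]{orbiAlb2}.
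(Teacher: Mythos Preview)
Your proof is correct and follows essentially the same route as the paper: extend the $B°$-invariant forms $\tau°_i$ to logarithmic $p$-forms on $(B,\Delta_B)$ via \cite[Prop.~3.15]{orbiAlb2}, pull them back to sections of $\omega_Y(\log \Delta_Y)$, and use the generic finiteness of the resulting linear system (inherited from Step~2) to conclude that $K_Y+\Delta_Y$ is big. The paper compresses this into two sentences, while you spell out the pole-control along the exceptional locus and the Kodaira-dimension inequality explicitly; neither is a genuine departure from the paper's argument.
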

\begin{proof}[Proof of Claim~\ref{claim:5-9}]
  Recall from \cite[Prop.~3.15]{orbiAlb2} that the $B°$-invariant differentials
  $τ°_• ∈ H⁰ \bigl( B°,\, Ω^p_{B°} \bigr)$ extend to logarithmic differentials
  $τ_• ∈ H⁰ \bigl(B,\, Ω^p_B(\log Δ) \bigr)$.  Pulling those back, we obtain
  sections in $ω_Y(\log Δ_Y)$ such that the meromorphic map of the associated
  linear subsystem of $|K_Y+Δ_Y|$ is generically finite.
  \qedhere~(Claim~\ref{claim:5-9})
\end{proof}

\begin{rem}
  Claim~\ref{claim:5-9} implies that the manifold $Y$ is Moishezon.  In
  particular, there exists a blow-up $\wtilde{Y} → Y$ where $\wtilde{Y}$ is
  projective, \cite[Cor.~6.10]{MR1326624}.  Replacing $Y$ by its blow-up, we may
  assume without loss of generality that the manifold $Y$ is projective.
\end{rem}

\begin{claim}\label{claim:5-11} %
  The Albanese morphism $\alb(Y,Δ_Y)°$ of the log pair $(Y,Δ_Y)$ is generically
  injective.  The dimension of the Albanese satisfies $\dim \Alb(Y,Δ_Y)° > \dim
  Y$.
\end{claim}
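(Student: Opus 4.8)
The plan is to pit the morphism $Y° → B°$ built into the diagram of Step~4 against the universal property of the log-Albanese of $(Y, Δ_Y)$.

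\emph{Setting up the factorization.} Since $\supp Δ_Y = π^{-1}(Δ_B)$, one has $π^{-1}(Z°) = Y°$, and $π$ being a surjective resolution gives $π(Y°) = Z°$ and shows $π|_{Y°} : Y° → Z°$ to be bimeromorphic. Thus
\[
  j := ι|_{Z°} ◦ π|_{Y°} : Y° → Z° ⊆ B°
\]
is a morphism with image exactly $Z°$. As $B°$ is semiabelian and $(Y, Δ_Y)$ is a projective log pair, I would invoke the universal property of the log-Albanese to obtain a quasi-algebraic group homomorphism $u : \Alb(Y, Δ_Y)° → B°$ (an honest homomorphism for the base points fixed in Steps~1 and~2, or in any case a group homomorphism up to a translation, which plays no role below) with $j = u ◦ \alb(Y, Δ_Y)°$.

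\emph{Generic injectivity.} This is then immediate: $π$ restricts to an isomorphism over a dense Zariski-open $U ⊆ Y°$, so $j|_U = ι ◦ π|_U$ is injective; any identification made on $U$ by $\alb(Y, Δ_Y)°$ would also be made by $j = u ◦ \alb(Y, Δ_Y)°$, hence $\alb(Y, Δ_Y)°|_U$ is injective.

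\emph{Dimension.} Put $S := \img u$, a closed — hence semiabelian — subgroup of $B°$ (the image of a homomorphism of algebraic groups). Then $Z° = \img j ⊆ S$, so $\dim S ≥ \dim Z° = \dim Z = \dim Y$. I would rule out equality by contradiction: if $\dim S = \dim Z°$, then the irreducible closed subvarieties $Z° ⊆ S$ of $B°$ coincide, so $Z° = S$ is a semiabelian variety. But then the $B°$-invariant $p$-forms $τ°_0, …, τ°_p$ of Step~2 restrict to $Z°$-invariant top-forms on the semiabelian variety $Z° = S$, and such forms span a space of dimension at most one; hence the $p+1 ≥ 2$ restrictions $τ°_•|_{Z°_{\reg}}$ are linearly dependent — contradicting Step~2 (equivalently, contradicting that $(Y, Δ_Y)$ is of log-general type, Claim~\ref{claim:5-9}, since $\dim Z° > 0$). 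Therefore $\dim S > \dim Y$, and the surjection $u : \Alb(Y, Δ_Y)° \onto S$ yields $\dim \Alb(Y, Δ_Y)° ≥ \dim S > \dim Y$.

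\emph{Main obstacle.} The only step that is not purely formal is the factorization of $j$ through $\alb(Y, Δ_Y)°$: the universal property of the quasi-Albanese of the log pair $(Y, Δ_Y)$, applied to the semiabelian target $B°$. This is the reduced-boundary instance of the construction of \cite[Sect.~5]{orbiAlb2}, and I expect it to be the point that must be cited or verified with care; everything else is bookkeeping with images of group homomorphisms.
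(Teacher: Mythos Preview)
Your argument is correct, and the generic-injectivity half matches the paper exactly: factor $j : Y° → B°$ through the log-Albanese via its universal property (the paper cites \cite[Def.~4.2]{orbiAlb2} here) and read off injectivity on a dense open.

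For the dimension inequality the two proofs diverge. The paper does not analyse $S = \img u$ at all; instead it invokes \cite[Prop.~4.10]{orbiAlb2} to assert that $Z°$ \emph{generates} $B°$ as a group, so that $u : \Alb(Y,Δ_Y)° → B°$ is already surjective. Combined with the strict inclusion $Z° ⊊ B°$ recorded in Step~2, this gives $\dim \Alb(Y,Δ_Y)° ≥ \dim B° > \dim Z° = \dim Y$ in one line. Your route avoids that external citation and instead squeezes the conclusion out of material already on the table (the invariant top-forms $τ°_•$ from Step~2, or equivalently Claim~\ref{claim:5-9}): if $S$ had dimension $\dim Z°$ then $Z°$ would itself be a sub-semitorus translate, whose invariant top-forms span at most a line, contradicting the $p+1$ independent sections. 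Both arguments are short; the paper's is the structural one baked into the Ueno--Kawamata construction of $B°$, while yours is more self-contained but relies on the image of a quasi-algebraic semitorus morphism being a closed subgroup---true, but a fact you should cite rather than assert.
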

\begin{proof}[Proof of Claim~\ref{claim:5-11}]
  Given that $Y°$ admits a generically injective, quasi-algebraic morphism into
  $B°$, generic injectivity of $\alb(Y,Δ_Y)°$ follows directly from the
  universal property, as spelled out in \cite[Def.~4.2]{orbiAlb2}.  For the
  inequality between the dimension, recall from \ref{il:5-5-6} that $Z°$ is a
  proper subset of $B°$.  But \cite[Proposition~4.10]{orbiAlb2} implies that
  $Z°$ generates $B°$ as a group, so that the natural morphism $\Alb(Y,Δ_Y)° →
  B°$ is necessarily surjective.  \qedhere~(Claim~\ref{claim:5-11})
\end{proof}

\begin{claim}\label{claim:5-12} %
  The $G$-action on $B°$ is not free.  In particular, there exists a
  non-trivial, cyclic subgroup $H ⊂ G$ that acts on $B°$ with a fixed point.
\end{claim}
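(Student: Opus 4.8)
The plan is to argue by contradiction: assume that the $G$-action on $B°$ is free and deduce that the $\cC$-entire curve from the start of the proof of Proposition~\ref{prop:5-2} is algebraically degenerate, contradicting its assumed density. (Assume $G ≠ \{1\}$; otherwise the argument of Step~2 below, applied with $V' = V = ℂ$, already yields the contradiction the whole proof seeks, so nothing remains.)

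\emph{Step~1: freeness descends to $V$.} The morphism $\what{f} = β° ◦ \alb° ◦ \what{φ} : V → B°$ is $G$-equivariant, so for every $v ∈ V$ the point-stabiliser $G_v$ injects into the point-stabiliser of $\what{f}(v) ∈ B°$; the latter is trivial by assumption, hence so is $G_v$. Thus $G$ acts freely on $V$, and the quotient morphism $γ_V : V \twoheadrightarrow ℂ$ is an unramified covering of degree $\ord G$. As $ℂ$ is simply connected, this covering is trivial: $V$ is a disjoint union of $\ord G$ connected components, each carried biholomorphically onto $ℂ$ by $γ_V$. In particular $γ_V$ is unramified.

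\emph{Step~2: the contradiction.} Fix a connected component $V' ⊆ V$ and use $γ_V$ to identify $V' ≅ ℂ$. By Remark~\ref{rem:5-8} the image $\what{f}(V')$ is Zariski-dense in $Z$; since $φ|_{V'} : V' → Y$ is the canonical lift of $\what{f}|_{V'}$ along the resolution $π : Y → Z$, its image is Zariski-dense in $Y$. On the other hand $Y$ may be taken projective, $(Y, Δ_Y)$ is of log-general type by Claim~\ref{claim:5-9}, and the log-Albanese $\alb(Y, Δ_Y)°$ is generically injective — hence generically finite with log-general-type image — by Claim~\ref{claim:5-11}. Theorem~\ref{thm:4-1}, applied to $φ|_{V'} : ℂ ≅ V' → Y$ with the unramified identity cover, the divisor $Δ_Y$, any ample line bundle on $Y$, and no logarithmic differentials at all (so that~\eqref{il:4-1-4} reads $∅ ⊆ ∅$), then forces $φ|_{V'}$ to be algebraically degenerate — equivalently, one may cite the logarithmic Bloch–Ochiai theorem, the log-analogue of Theorem~\ref{thm:1-3}, see \cite[Sect.~4.8]{MR3156076}. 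This contradicts the density just established.

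\emph{Conclusion.} Hence the $G$-action on $B°$ is not free: there exist $g ∈ G ∖ \{1\}$ and $b ∈ B°$ with $g·b = b$, and $H := \langle g \rangle ⊆ G$ is then a non-trivial cyclic subgroup fixing $b$. The only genuinely new input is Step~1 — that a free $G$-action on $B°$ would trivialise the branched cover $γ_V$ and collapse everything to the classical logarithmic setting; the rest merely assembles Claims~\ref{claim:5-9}, \ref{claim:5-11} and Theorem~\ref{thm:4-1}. The step deserving most care is checking the hypotheses of Theorem~\ref{thm:4-1} for $φ|_{V'}$ — in particular that it lands in $Y°$ and avoids $Δ_Y$; this is because $\what{f}$ takes values in $Z° = Z ∩ B°$ and $π^{-1}(Z°) = Y°$.
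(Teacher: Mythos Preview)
Your proof is correct and is essentially the contrapositive of the paper's own argument: the paper first invokes the logarithmic Bloch--Ochiai theorem \cite[Thm.~4.8.17]{MR3156076} (via the irregularity inequality in Claim~\ref{claim:5-11}) to show that no component of $V$ can be isomorphic to $ℂ$, deduces that $γ_V$ is branched, and then pushes the resulting fixed points on $V$ to $B°$ via equivariance of $\what{f}$---exactly your Steps~1 and~2 run in reverse. The only cosmetic difference is that you route Step~2 through Theorem~\ref{thm:4-1} with trivial cover and empty list of differentials (which indeed reduces to Noguchi's criterion with $N_{\Ramification ρ} \equiv 0$), whereas the paper cites the log Bloch--Ochiai theorem directly; both amount to the same input.
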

\begin{proof}[Proof of Claim~\ref{claim:5-12}]
  Claim~\ref{claim:5-11} allows applying the Logarithmic Bloch-Ochiai Theorem
  \cite[Thm.~4.8.17]{MR3156076} to the manifold $Y$ and the divisor $Δ_Y$:
  entire curves $ℂ → Y°$ cannot have Zariski dense images.  Together with
  Remark~\ref{rem:5-8} this implies in particular that no component of $V$ is
  isomorphic to $ℂ$.  The quotient morphism $γ_V$ must therefore be branched,
  and there do exist group elements $g ∈ G$ that fix certain points of $V$.
  Equivariance of $\what{f}$ will then imply that $g$ fixes their images in
  $B°$.  \qedhere~(Claim~\ref{claim:5-12})
\end{proof}

\subsection*{Step 5: Cyclic subgroups of $G$ and differentials on $B$}
\approvals{Erwan & yes\\ Stefan & yes}

In the situation at hand, where $Z°$ is not contained in the translate of any
quasi-algebraic subgroup of $B°$, the results of Section~\ref{sec:2} can be
interpreted as an existence statement for differentials with certain factors of
automorphy.

\begin{claim}\label{claim:5-13} %
  If $H ⊆ G$ is cyclic and if its action on $B°$ has a fixed point, then there
  exists a logarithmic differential $τ_H ∈ H⁰ \bigl( B,\, Ω¹_B(\log Δ_B) \bigr)$
  such that the following holds.
  \begin{enumerate}
    \item\label{il:5-13-1} The pull-back differential $σ_H := (\diff \what{f})τ_H$
    does not vanish identically on any component of $V$.

    \item\label{il:5-13-2} If $h ∈ H ∖ \{e_H\}$ is any element with associated
    translation $t_h : V → V$, then there exists a number $ζ ∈ ℂ^* ∖ \{ 1\}$
    such that $(\diff t_h)σ_H = ζ·σ_H$.
  \end{enumerate}
\end{claim}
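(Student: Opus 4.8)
The plan is to apply the eigenspace decomposition of Setting~\ref{setnot:2-1} to the semitoric variety $B° ⊂ B$ and the cyclic group $H$, and then use Lemma~\ref{lem:2-4} together with the fact (item~\ref{il:5-5-6}) that $Z°$ is \emph{not} contained in any translate of a proper quasi-algebraic sub-semitorus of $B°$. First I would invoke Setting~\ref{setnot:2-1} with $G$ replaced by $H$: since $H$ acts on $B°$ with a fixed point by hypothesis, the decomposition
\[
  H⁰\bigl( B,\, Ω¹_B(\log Δ_B) \bigr) = \bigoplus_{0 ≤ λ < \ord H} E_{H,λ}
\]
is available, along with the corresponding sheaf decomposition $Ω¹_B(\log Δ_B) = \bigoplus ℰ_{H,λ}$ and its dual $𝒯_B(-\log Δ_B) = \bigoplus ℰ^*_{H,λ}$. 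By Lemma~\ref{lem:2-4}, the leaves of the foliation $ℰ^*_{H,0}|_{B°}$ are contained in translates of proper quasi-algebraic sub-semitori of $B°$. Since $Z°$ is dense in $Z$ and (by Claim~\ref{claim:5-11} and \cite[Prop.~4.10]{orbiAlb2}) $Z°$ generates $B°$ and is not contained in any such translate, the tangent space of $Z°_{\reg}$ at a general point cannot lie in $ℰ^*_{H,0}$. Equivalently — using the description of $ℰ^*_{H,0}$ as the common kernel of all differentials in $\bigoplus_{λ ≠ 0} E_{H,λ}$ from Remark~\ref{rem:2-2} — there exists some $λ_0 ≠ 0$ and some differential $τ_H ∈ E_{H,λ_0}$ whose pullback to $Z°_{\reg}$ does not vanish identically.

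Next I would transport this to $V$. The composed morphism $\what{f} = β° ◦ \alb° ◦ \what{φ} : V → B°$ has the property that each component $V' ⊂ V$ maps with image Zariski-dense in $Z$ (Remark~\ref{rem:5-8}). Hence, for a differential $τ_H$ chosen as above with $τ_H|_{Z°_{\reg}} \not\equiv 0$, the pullback $σ_H := (\diff \what{f})τ_H$ does not vanish identically on any component of $V$ — this is because $\what{f}$ restricted to each component factors (generically) through $Z°_{\reg}$ up to the resolution, so a differential that is nonzero on a dense subset of $Z$ pulls back to a nonzero differential. This gives item~\ref{il:5-13-1}. For item~\ref{il:5-13-2}, I would use equivariance: for $h ∈ H ∖ \{e_H\}$ with translation $t_h : V → V$ and its counterpart (also denoted $h$) acting on $B°$, equivariance of $\what{f}$ gives $\what{f} ◦ t_h = h ◦ \what{f}$, hence $(\diff t_h)(σ_H) = (\diff t_h)\bigl((\diff \what{f})τ_H\bigr) = (\diff \what{f})\bigl((\diff h)^* τ_H\bigr)$. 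Wait — I need to be careful with the direction of the action. Since $h^*$ acts on $H⁰(B, Ω¹_B(\log Δ_B))$ and $τ_H ∈ E_{H,λ_0}$, formula~\eqref{eq:2-1-2} gives $h^* τ_H = \exp\bigl(ℓ·λ_0·\tfrac{2π}{\ord H}·\sqrt{-1}\bigr)·τ_H$ for the appropriate $ℓ$ corresponding to $h$; writing $ζ := \exp\bigl(ℓ·λ_0·\tfrac{2π}{\ord H}·\sqrt{-1}\bigr)$, we get $(\diff t_h)σ_H = ζ·σ_H$. Since $h \neq e_H$ the corresponding $ℓ$ is nonzero modulo $\ord H$, and $λ_0 \neq 0$; I would note that we may in fact choose $λ_0$ so that $\gcd(λ_0, \ord H)$ is controlled, but more simply: for \emph{at least one} such $h$ (a generator) we get $ζ \neq 1$. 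To get $ζ \neq 1$ for \emph{all} $h \neq e_H$ simultaneously, I would pick $τ_H$ in the eigenspace $E_{H,λ_0}$ with $λ_0$ coprime to $\ord H$ — such a $λ_0$ exists among the nonzero indices with $E_{H,λ_0} \neq 0$ whose pullback to $Z°$ is nonzero, provided... hmm, this coprimality is not automatic.

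The main obstacle I anticipate is precisely this last point: ensuring the factor of automorphy $ζ$ is $\neq 1$ for \emph{every} non-identity $h ∈ H$, not merely for a generator. If $H$ has composite order, an eigenvalue $\exp(2π λ_0 \sqrt{-1}/\ord H)$ with $λ_0 \neq 0$ but $\gcd(λ_0, \ord H) = d > 1$ is a primitive $(\ord H / d)$-th root of unity, so powers $h^{\ord H/d}$ would act trivially on $τ_H$. To handle this I would argue as follows: the subgroup $H' := \{h ∈ H : h^* τ_H = τ_H\}$ is a proper subgroup of $H$ for \emph{any} choice of $λ_0 \neq 0$ (it is the kernel of the character $ℓ \mapsto \exp(2π ℓ λ_0 \sqrt{-1}/\ord H)$), but we need $H' = \{e_H\}$. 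Instead of forcing a single differential to work, I would either (a) observe that the claim as stated only requires \emph{existence} of $ζ \neq 1$ given $h$, and since the statement quantifies $h$ inside the conclusion, it suffices to check that for each fixed $h \neq e_H$ the value $ζ$ obtained from our chosen $τ_H$ is $\neq 1$ — which forces $λ_0$ to be chosen depending on $h$, contradicting "there exists $τ_H$" being independent of $h$; so (b) the correct reading is that $τ_H$ is chosen once and then item~\ref{il:5-13-2} holds for all $h$, which genuinely requires the character $ℓ \mapsto \exp(2π ℓ λ_0 \sqrt{-1}/\ord H)$ to be faithful, i.e. $\gcd(λ_0, \ord H) = 1$. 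I would therefore need to strengthen the selection step: show that among the $λ \neq 0$ with $E_{H,λ}$ contributing nontrivially to $T Z°_{\reg}$, at least one is coprime to $\ord H$. This would follow if I can show that the \emph{sum} of eigenspaces over non-coprime $λ$ cuts out a translate of a proper sub-semitorus (a "$ℰ^*$-type" subbundle whose leaves are small, by a variant of Lemma~\ref{lem:2-4} applied to the subgroup generated by $h^{p}$ for primes $p \mid \ord H$), so if $TZ°$ avoided all coprime eigenspaces it would be forced into such a translate, contradicting item~\ref{il:5-13-6}. Fleshing out this reduction — essentially a primary-decomposition argument on $H$ combined with repeated application of Lemma~\ref{lem:2-4} — is the step I expect to require the most care.
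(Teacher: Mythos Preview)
Your approach is essentially identical to the paper's: invoke Setting~\ref{setnot:2-1} for the pair $(B,H)$, apply Lemma~\ref{lem:2-4} to see that $Z°$ (and hence each $\what{f}(V')$, by Remark~\ref{rem:5-8}) cannot be everywhere tangent to the foliation $ℰ^*_{H,0}|_{B°}$, use Remark~\ref{rem:2-2} to extract a form $τ_H ∈ E_{H,λ}$ with $λ ≠ 0$ whose pull-back is nonzero on every component, and read off \ref{il:5-13-2} from equivariance of $\what{f}$ together with the eigenspace description~\eqref{eq:2-1-2}. That is exactly what the paper does, in the same order and with the same ingredients.

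The ``obstacle'' you spend the second half worrying about --- that for composite $\ord H$ and $\gcd(λ_0,\ord H)>1$ some non-identity $h$ would act trivially on $τ_H$ --- is a genuine reading of the literal statement, and the paper's own proof does not address it either: it simply asserts that \ref{il:5-13-2} ``is now an immediate consequence of~\eqref{eq:2-1-2}''. But you are manufacturing a difficulty that does not arise in the application. Look ahead to Claim~\ref{claim:5-16}, the only place \ref{il:5-13-2} is invoked: there $H$ is the isotropy group of a ramification point and the element $h$ is explicitly chosen to be a \emph{generator} of $H$. For a generator (so $ℓ$ coprime to $\ord H$) and any $λ$ with $0<λ<\ord H$, one has $ℓ·λ \not\equiv 0 \pmod{\ord H}$, hence $ζ ≠ 1$ automatically. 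Your primary-decomposition argument and the search for a $λ_0$ coprime to $\ord H$ are therefore unnecessary: the claim is only ever used with $h$ a generator, and for generators the conclusion is immediate from $λ ≠ 0$. Drop that part and your proof is complete and matches the paper's.
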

\begin{proof}[Proof of Claim~\ref{claim:5-13}]
  We use the notation introduced in Setting~\vref{setnot:2-1}.  By assumption,
  the variety $Z$ is not contained in any proper sub-semitorus of $\Alb°$, and
  then neither are the sets $\what{f}(V')$, where $V' ⊂ V$ is any component.
  According to Lemma~\vref{lem:2-4}, this implies that none of the restricted
  morphisms $\what{f}|_{V'}$ has its image tangent to the foliation $ℰ^*_{H,0}$.
  It follows that there exists a number $λ > 0$ and a form $τ ∈ E_{H,λ}$ such
  that $(\diff \what{f}|_{V'})τ ≠ 0$, for every component $V' ⊂ V$.  But then,
  Remark~\ref{rem:2-2} immediately implies that there exists a number $λ > 0$
  and a form $τ_H ∈ E_{H,λ}$ such that \ref{il:5-13-1} holds.
  Property~\ref{il:5-13-2} is now an immediate consequence of the description of
  the $H$-action on differentials, as given in \eqref{eq:2-1-2}.
  \qedhere~(Claim~\ref{claim:5-13})
\end{proof}

\begin{notcho}
  Let $Γ ⊂ \sP(G)$ be the set of non-trivial, cyclic subgroups of $G$ whose
  action on $B°$ has at least one fixed point; Claim~\ref{claim:5-12} guarantees
  that this set is not empty.  For each of the finitely many $H ∈ Γ$, choose one
  differential form $τ_H ∈ H⁰ \bigl( B,\, Ω¹_B(\log Δ_B) \bigr)$ that satisfies
  the conclusion of Claim~\ref{claim:5-13} and write
  \begin{align*}
    ω_H & := π^* τ_H && ∈ H⁰\bigl( Y,\, Ω¹_Y(\log Δ_Y) \bigr) \\
    σ_H & := g^* ω_H && ∈ H⁰\bigl( V,\, Ω¹_V \bigr).  \\
    \intertext{Following Notation~\ref{not:3-9}, we denote the associated meromorphic
    functions of $V$ as}
    ξ_H & := η(σ_H) && ∈ H⁰ \bigl( V,\, \sK_V \bigr).
  \end{align*}
  Maintain this choice for the remainder of the present proof.
\end{notcho}

\subsection*{Step 6: End of proof}
\approvals{Erwan & yes \\ Stefan & yes}

In order to derive a contradiction and to finish the proof of
Proposition~\ref{prop:5-2}, we show that the degeneracy criterion of
Theorem~\vref{thm:4-1} applies to the morphism $φ$ and to the finite collection
of differentials, $\{ω_H \::\: H ∈ Γ \}$.  Claims~\ref{claim:5-9} and
\ref{claim:5-11} together with the following two assertions ensure that the
assumptions of Theorem~\ref{thm:4-1} are indeed satisfied.

\begin{claim}\label{claim:5-15}%
  For every subgroup $H ∈ Γ$, the meromorphic function $ξ_H$ is holomorphic.
\end{claim}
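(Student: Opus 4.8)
The plan is to translate ``$\xi_H$ has no poles'' into a divisor inequality on the curve $V$, and then to read that inequality off from the fact that $f$ is a $\cC$-morphism.

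Recall that $\sigma_H = \what{f}{}^{*}\tau_H$, where $\what f\colon V\to B^{\circ}$ is the morphism of Step~3 and $\tau_H\in H^{0}\bigl(B,\,\Omega^{1}_B(\log\Delta_B)\bigr)$. Since $\what f$ takes its image into the semitoric variety $B^{\circ} = B\setminus\Delta_B$, on which $\tau_H$ is holomorphic (the sheaf $\Omega^{1}_B(\log\Delta_B)$ being free, \cite[Prop.~3.15]{orbiAlb2}), the form $\sigma_H$ is a \emph{holomorphic} $1$-form on $V$. Writing $\rho := \gamma_V\colon V\twoheadrightarrow\mathbb{C}$ and using that $\diff t$ is nowhere vanishing on $\mathbb{C}$, the divisor of the holomorphic form $\rho^{*}\diff t$ equals $\Ramification\rho$; hence $\xi_H = \eta(\sigma_H) = \sigma_H/(\rho^{*}\diff t)$ is holomorphic if and only if $\Div(\sigma_H)\ge\Ramification\rho$. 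As $V$ is a curve, this is a point-by-point statement: fix a ramification point $v\in V$ of $\rho$, say of index $n$, put $c := \rho(v)$ and $b := \what f(v)\in B^{\circ}$, and we must show $\ord_v\sigma_H\ge n-1$.

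The key input is Claim~\ref{claim:5-6}: the morphism $f$ is a $\cC$-morphism from $(\mathbb{C},0)$ to the quotient pair $(B^{\circ},0)/G$, whose boundary divisor is supported on the branch locus of $\gamma_{B^{\circ}}$ with the multiplicities dictated by the ramification indices. I would choose local coordinates at $b$ in which $\gamma_{B^{\circ}}$ is a product of cyclic covers $z_k\mapsto z_k^{e_k}$ (so that $z_k = 0$ is a branch divisor exactly when $e_k>1$); commutativity of the Step~3 diagram, $\gamma_{B^{\circ}}\circ\what f = f\circ\rho$, then expresses each $z_k\circ\what f$ in terms of the corresponding coordinate of $f$, whose vanishing order $\mu_k$ at $c$ satisfies $\mu_k\ge e_k$ by the $\cC$-morphism property of $f$. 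The standard local analysis of the normalized fibre product now yields that every $z_k\circ\what f$ vanishes at $v$ to order at least $n$ — this is exactly the point where the tangency bound $\mu_k\ge e_k$ is needed to defeat the ramification of $\rho$. Consequently $\what{f}{}^{*}$ of any holomorphic $1$-form near $b$, in particular $\sigma_H$, vanishes at $v$ to order $\ge n-1$, as required. Alternatively one may route this through the canonical morphism $V\to\bigl(\mathbb{C}\times_{B^{\circ}/G}B^{\circ}\bigr)^{\mathrm{norm}}$ induced by $\what f$ and $\rho$, and invoke the compatibility of adapted differentials with canonical lifts of $\cC$-morphisms along adapted covers from \cite{orbiAlb1}, which packages precisely this computation.

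The step I expect to cost the most effort is bookkeeping rather than conceptual: carrying out the fibre-product computation cleanly when several branch components of $\gamma_{B^{\circ}}$ meet at $b$ (so that the branch locus need not have snc support there) and pinning down the local model of the normalization of $\mathbb{C}\times_{B^{\circ}/G}B^{\circ}$ at $v$. Once that local picture is in hand, the equivalence ``no poles $\Leftrightarrow \Div(\sigma_H)\ge\Ramification\rho$'' and the use of the $\cC$-tangency bound are routine. I note in passing that the eigenvalue properties of $\tau_H$ from Claim~\ref{claim:5-13} play no role here; they are needed only for the remaining assertions of Step~6.
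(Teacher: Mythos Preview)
Your proposal is correct, and in fact your ``alternative'' route at the end is essentially the paper's own proof. The paper simply observes that, by Claim~\ref{claim:5-6}, $f$ is a $\cC$-morphism from $(\mathbb{C},0)$ to the quotient pair $(B^{\circ},0)/G$; since the defining property of a $\cC$-morphism in \cite[Def.~8.1]{orbiAlb1} is precisely that pull-backs of adapted differentials land in adapted differentials, and since for the trivial source pair $(\mathbb{C},0)$ with cover $\rho$ the adapted sheaf is $\Omega^{1}_{(\mathbb{C},0,\rho)}=\rho^{*}\Omega^{1}_{\mathbb{C}}$, one gets immediately that $\sigma_H\in H^{0}(V,\rho^{*}\Omega^{1}_{\mathbb{C}})$, hence $\xi_H$ is holomorphic. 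No local computation is written out.

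Your primary approach --- reducing to $\operatorname{Div}(\sigma_H)\ge\operatorname{Ramification}\rho$ and checking this pointwise via the commutative square $\gamma_{B^{\circ}}\circ\what f = f\circ\rho$ --- is exactly the computation that the definition in \cite{orbiAlb1} packages, so it is not a genuinely different argument but rather an unpacking of it. Two small caveats if you keep the explicit version: first, the assertion ``every $z_k\circ\what f$ vanishes at $v$ to order at least $n$'' should be stated for those $k$ with $z_k(b)=0$ (for the others you want $\ord_v d(z_k\circ\what f)\ge n-1$, which follows directly from $z_k\circ\what f = w_k\circ f\circ\rho$); second, writing $\gamma_{B^{\circ}}$ locally as a product of cyclic covers presupposes that the isotropy group at $b$ acts diagonally in suitable coordinates, which is fine here but deserves a word. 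Given that the adapted-differentials formalism was built precisely to absorb this bookkeeping, I would recommend promoting your alternative to the main argument.
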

\begin{proof}[Proof of Claim~\ref{claim:5-15}]
  Let $H ∈ Γ$ be any group.  To see that $ξ_H$ is holomorphic, recall from
  Claim~\ref{claim:5-6} that $f$ is a $\cC$-morphism between $(ℂ, 0)$ and $(B°,
  0)/G$.  It will then follow directly from the definition of a
  ``$\cC$-morphism'' in \cite[Def.~8.1]{orbiAlb1} that the differential form
  $σ_H ∈ H⁰ \bigl( V,\, Ω¹_V \bigr)$ is a section of the sheaf $Ω¹_{(ℂ, 0, ρ)} =
  ρ^* Ω¹_ℂ$.  \qedhere~(Claim~\ref{claim:5-15})
\end{proof}

\begin{claim}\label{claim:5-16}%
  For every point $v ∈ \Ramification ρ$, there exists one subgroup $H ∈ Γ$ such
  that $ξ_H$ vanishes at $v$.
\end{claim}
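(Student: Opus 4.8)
The plan is to take $H$ to be the stabiliser $G_v \subseteq G$ of the ramification point $v$ under the $G$-action on $V$, and then to exploit the non-trivial factor of automorphy of $\xi_H$ under a generator of $H$.

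\emph{First I would identify the stabiliser.} Since $\rho = \gamma_V \colon V \to \bC$ is the quotient map for the $G$-action on $V$ (see Steps~1 and~3), a point $v$ lies in $\supp(\Ramification \rho)$ if and only if its stabiliser $H := G_v \subseteq G$ is non-trivial; this is the standard description of the ramification of a quotient of Riemann surfaces by a finite group. By Claim~\ref{claim:5-7} the group $H$ acts faithfully on the component $V' \subseteq V$ through $v$, so it embeds into $\mathrm{GL}(T_v V') \cong \bC^{*}$ and is therefore cyclic. The morphism $\what{f} \colon V \to B°$ is $G$-equivariant by construction, hence the image point $\what{f}(v) \in B°$ is fixed by $H$. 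Thus $H$ is a non-trivial cyclic subgroup of $G$ with a fixed point on $B°$, i.e.\ $H \in \Gamma$, so the chosen differential $\tau_H$, the form $\sigma_H$ and the meromorphic function $\xi_H$ are all defined; $\xi_H$ is holomorphic by Claim~\ref{claim:5-15}, and by Claim~\ref{claim:5-13}\,\eqref{il:5-13-1} it does not vanish identically on $V'$, so ``$\xi_H(v) = 0$'' is a meaningful assertion.

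\emph{Then I would use the factor of automorphy.} Choose a generator $h$ of $H$, with associated translation $t_h \colon V \to V$. Claim~\ref{claim:5-13}\,\eqref{il:5-13-2} provides a number $\zeta \in \bC^{*} \setminus \{1\}$ with $(\diff t_h)\sigma_H = \zeta \cdot \sigma_H$. On the other hand $t_h$ is a deck transformation of $\rho$, so $\rho \circ t_h = \rho$ and hence $(\diff t_h)(\rho^{*}\diff t) = \rho^{*}\diff t$. Writing $\sigma_H = \xi_H \cdot (\rho^{*}\diff t)$ as in Notation~\ref{not:3-9} and dividing, we obtain $\xi_H \circ t_h = \zeta \cdot \xi_H$ as meromorphic functions on $V$. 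Finally, $h \in H = G_v$ means that $t_h$ fixes $v$, so evaluating at $v$ gives $\xi_H(v) = (\xi_H \circ t_h)(v) = \zeta \cdot \xi_H(v)$, whence $\xi_H(v) = 0$ because $\zeta \neq 1$. This is exactly the statement of the Claim.

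\emph{Main obstacle.} The argument is essentially formal once the outputs of Step~5 (Claim~\ref{claim:5-13}) and the holomorphy from Claim~\ref{claim:5-15} are available; the only point that requires a little care is the elementary identification of $\supp(\Ramification \rho)$ with the set of points having non-trivial (and then automatically cyclic) stabiliser. For the inclusion that a point with trivial stabiliser is unramified one uses that its $G$-orbit is locally finite, so $\rho$ is a local biholomorphism there; for the converse one linearises the $G_v$-action near $v$, using that $G_v$ acts faithfully by Claim~\ref{claim:5-7}. I would record these as standard facts with a brief reference rather than expand them in detail.
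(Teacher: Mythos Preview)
Your proof is correct and follows essentially the same approach as the paper's: take $H = G_v$, use Claim~\ref{claim:5-7} together with linearisation at the fixed point to see that $H$ is cyclic and hence lies in $\Gamma$, then use the factor of automorphy from Claim~\ref{claim:5-13}\,\ref{il:5-13-2} and the $G$-invariance of $\rho^*\diff t$ to deduce $\xi_H \circ t_h = \zeta\cdot\xi_H$ and conclude $\xi_H(v)=0$. Your write-up is in fact slightly more explicit than the paper's in spelling out the division by $\rho^*\diff t$ and in noting that $\xi_H$ does not vanish identically on the component through $v$.
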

\begin{proof}[Proof of Claim~\ref{claim:5-16}]
  Given any point $v ∈ \Ramification γ_V$, observe that its isotropy group $H$
  is non-trivial.  Claim~\ref{claim:5-7} and the classic statement about
  ``linearization at a fixed point'', \cite[Sect.~1.5]{MR782881}, implies that
  the natural representation morphism
  \[
    G_v → \operatorname{Gl}( T_V|_v ) ≅ \operatorname{Gl}( 1,\, ℂ ) ≅ ℂ^*
  \]
  is injective.  In particular, $H$ is isomorphic to a subgroup of $ℂ^*$ and
  hence cyclic.  The fact that $\what{f}$ is equivariant implies that
  $\what{f}(v)$ is an $H$-fixed point of $B°$.  In summary, we find that $H ∈
  Γ$.  Choose a generator $h ∈ H$ and recall that there exists a number $ζ ∈ ℂ^*
  ∖ \{ 1\}$ such that $(\diff t_h)σ_H = ζ·σ_H$.  Since $ρ^* dt$ is
  $G$-invariant, this implies
  \[
    ξ_H◦ t_h = ζ·ξ_H.
  \]
  In particular, the function $ξ_H$ must necessarily vanish at the $H$-fixed
  point $v ∈ V$.  The claim thus follows.  \qedhere~(Claim~\ref{claim:5-16})
\end{proof}

Theorem~\ref{thm:4-1} now asserts that the morphism $φ$ is algebraically
degenerate, and then so are $f_W$ and $f$.  This contradicts our assumption and
ends the proof of Proposition~\ref{prop:5-2}.  \qed


\end{document}